\newcommand{\mypound}{\scalebox{0.8}{\raisebox{0.4ex}{\#}}}
\newcommand{\CC}{\mathbb C}
\newcommand{\QQ}{\mathbb Q}
\newcommand{\NN}{\mathbb N}
\newcommand{\ZZ}{\mathbb Z}
\newcommand{\kk}{\Bbbk}
\newcommand{\tensor}{\mathop{\otimes}}
\newcommand{\im}{\mathop{\mathrm{im}}}
\newcommand{\rank}{\mathop{\mathrm{Rank}}}  
\newcommand{\marginparstretch}{0.6}
\let\oldmarginpar\marginpar
\renewcommand\marginpar[1]{\-\oldmarginpar[\framebox{\setstretch{\marginparstretch}\begin{minipage}{\marginparwidth}{\raggedleft\tiny #1}\end{minipage}}]{\framebox{\setstretch{\marginparstretch}\begin{minipage}{\marginparwidth}{\raggedright\tiny #1}\end{minipage}}}}
\theoremstyle{plain}
\newtheorem{theorem}{Theorem}
\newtheorem{proposition}[theorem]{Proposition}
\newtheorem{corollary}[theorem]{Corollary}
\newtheorem{lemma}[theorem]{Lemma}
\theoremstyle{definition}
\newtheorem{Definition}[theorem]{Definition}
\newtheorem{notation}[theorem]{Notation}
\newtheorem{example}[theorem]{Example}
\newtheorem{remark}[theorem]{Remark}
\DeclareMathOperator{\Aut}{Aut}
\DeclareMathOperator{\gr}{gr}
\DeclareMathOperator{\spec}{Spec}
\DeclareMathOperator{\mspec}{MaxSpec}
\DeclareMathOperator{\Hom}{Hom}
\DeclareMathOperator{\Tor}{Tor}
\DeclareMathOperator{\gdim}{gldim}
\DeclareMathOperator{\pdim}{pdim}
\DeclareMathOperator{\Deg}{deg}
\DeclareMathOperator{\Stab}{Stab}
\DeclareMathOperator{\lt}{lt}
\numberwithin{theorem}{section}        
\let\c@equation\c@theorem              
\begin{document}
\title{Idealisers in skew Group Rings}
\author{Ruth Reynolds}
\date{\today}
\thanks{The University of Edinburgh, James Clerk Maxwell Building, Peter Guthrie Tait Road, Edinburgh EH9 3FD.
 { \tt r.a.e.reynolds@sms.ed.ac.uk}}

\maketitle

\begin{abstract}
  Let $C$ be a commutative noetherian domain, $G$ be a finitely generated abelian group which acts on $C$ and $B = C\mypound G$ be the skew group ring. For a prime ideal $I \lhd C$, we study the largest subring of $B$ in which the right ideal $IB$ becomes a two-sided ideal - the idealiser subring. We obtain necessary and sufficient conditions for when this idealiser subring is left and right noetherian. We also give an example of these conditions in practice which translates to an interesting number theoretic problem.
\end{abstract}
\tableofcontents
\section{Introduction}
Let $B$ be a ring, $I$ be a right ideal in $B$ and $\mathbb{I}_B(I) = \{b \in B \mid bI \subseteq I\}$. This subring of $B$ is called the {\em idealiser of $I$ in $B$} and is the largest subring of $B$ which contains $I$ as a two-sided ideal. There is no analogue of this construction in commutative rings and so this is an intrinsically noncommutative concept. 

Introduced by Ore in \cite{Ore}, idealisers are highly noncommutative rings with interesting and sometimes pathological behaviour. For example, Stafford used idealisers to construct a variety of left and right noetherian rings with peculiar ideal structure \cite{Sta}. Idealisers occur naturally in Artin and Stafford's classification of noncommutative projective curves \cite{AS95} so we expect idealisers to feature in any sufficiently general noncommutative classification. Hence a better understanding of the behaviour of these subrings is desired. In the Artin-Stafford classification idealisers come from noetherian graded rings which have the property that no Veronese is generated in degree $1$. This property never occurs for commutative rings. 

Idealisers are often good examples of rings that have different left and right structures. For example, if 
\begin{align*}B=\CC[x,y][t^{\pm};\sigma]\end{align*} is a skew Laurent ring where
\begin{align*}\sigma(x) = x+1 \textrm{ and } \sigma(y)=y,
\end{align*} and we consider the right ideal $(x,y)B$, then we show below that $\mathbb{I}_B((x,y)B)$ is right but not left noetherian. The reasons for this will become clear in the paper, but we mention that this depends on the dynamics of the orbit of $(x,y)$ under $\sigma$. A related example was essential in the proof by Sierra and Walton that the universal enveloping algebra of the Witt algebra is not noetherian \cite{SW} which answered a question which had been open for over $20$ years.

Idealisers were studied in detail by Robson in \cite{Robson1972} where he observed that in the case that right ideal $I$ is semimaximal, that is to say, $I$ is the intersection of maximal right ideals, then the properties of the $B$ and $\mathbb{I}_B(I)$ are very closely linked. In particular, the following result characterises the noetherianity of an idealiser of a maximal right ideal. 
\begin{theorem}\cite[Theorem 2.3]{Robson1972}\label{rob}
Let $B$ be a ring, $I$ a maximal right ideal of $B$. Then $B$ is right noetherian if and only if $\mathbb{I}_B(I)$ is right noetherian.
\end{theorem}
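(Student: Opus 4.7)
The plan is to deduce the equivalence from a single structural observation: $B/\mathbb{I}_B(I)$ is a simple right $\mathbb{I}_B(I)$-module. Write $S := \mathbb{I}_B(I)$, so that $I \subseteq S \subseteq B$ and $I$ is a two-sided ideal of $S$. To prove the simplicity claim, take any $b \in B \setminus S$; by definition of the idealiser $bI \not\subseteq I$, so $bI + I$ is a right ideal of $B$ strictly containing $I$, and by maximality of $I$ we get $bI + I = B$. A fortiori $bS + S = B$, so $(b + S) S = B/S$, and $B/S$ is simple as a right $S$-module.

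For the direction $S$ right noetherian $\Rightarrow$ $B$ right noetherian, let $J$ be a right ideal of $B$. Then $J \cap S$ is a right ideal of $S$, hence finitely generated, say by $a_1, \dots, a_n$. If $J \subseteq S$ one easily gets $J = \sum a_i B$. Otherwise pick $j \in J \setminus S$; simplicity of $B/S$ yields $jS + S = B$, and writing any $j' \in J$ as $j' = js + s'$ with $s, s' \in S$ forces $s' \in J \cap S$. Hence $J = jS + (J \cap S)$, and so $J = jB + a_1 B + \cdots + a_n B$ as a right ideal of $B$.

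For the reverse direction, let $T$ be a right ideal of $S$, and pick $t_1, \dots, t_n \in T$ generating the right ideal $TB$ of $B$ (using right noetherianity). Set $T' := \sum t_i S \subseteq T$, so that $T' \subseteq T \subseteq TB = T'B$. The right $S$-module $T'B/T'$ receives a surjection from $\bigoplus_{i=1}^n B/S$ via $(b_i) \mapsto \sum t_i b_i + T'$, which has finite length by the simplicity of $B/S$. Hence its submodule $T/T'$ is finitely generated, and combined with the finite generation of $T'$, this gives that $T$ is finitely generated.

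The main obstacle, as I see it, is the simplicity claim itself: this is the unique place where maximality of $I$ in $B$ is used in an essential way, and everything else is routine module-theoretic bookkeeping once it is in hand. The simplicity of $B/S$ also suggests why one should expect an analogue for semimaximal $I$ (as Robson indeed proves), since $B/S$ will then be semisimple of finite length.
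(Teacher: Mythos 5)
The paper offers no proof of this statement: it is quoted verbatim from Robson with a citation to \cite[Theorem 2.3]{Robson1972}, so there is no in-text argument to compare against. Your proof is correct and self-contained, and it is worth recording how it relates to Robson's original route. Robson proves the result for generative semimaximal right ideals by first establishing (his Lemmas 2.1--2.2) that $B$ is a finitely generated (projective) right module over $S=\mathbb{I}_B(I)$ and that $B/I$ has finite length as a right $S$-module; the equivalence of noetherianity then follows by the same bookkeeping you perform. Your observation that $B/S$ is a simple right $S$-module is the maximal-ideal specialisation of that finite-length statement, and it buys a shorter argument: in the forward direction you could even shortcut your case analysis by noting that simplicity of $B/S$ gives $B=jS+S$ for any $j\notin S$, so $B$ is a $2$-generated right $S$-module and right noetherianity of $B$ follows at once; your reverse direction is essentially identical to Robson's. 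Two small points you should flag but which cost nothing: the degenerate case $S=B$ (which occurs exactly when $I$ is two-sided, i.e.\ when $I$ is not generative) makes the theorem trivial but makes $B/S$ the zero module rather than a simple one, so the simplicity claim should be stated for $S\neq B$; and the step $T\subseteq TB$ uses that $B$ is unital, which is the standing convention here. Your closing remark is also accurate: for semimaximal $I$ the module $B/S$ becomes semisimple of finite length, which is precisely the generalisation Robson exploits.
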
 
We note that Robson does not address left noetherianity at all in his paper and, as a consequence of results in this paper, a left noetherian version of Theorem \ref{rob} in a graded setting does not hold. Indeed, the idealiser mentioned above is a counterexample.

The objective of this paper is to investigate the noetherianity of idealisers in skew group rings. In \cite{Rog} and \cite{Sierra}, Rogalski and Sierra considered what it means for idealisers in twisted homogeneous coordinate rings \cite{AV} to be left and right noetherian and they obtained the following result which we paraphrase.

\begin{theorem}\cite[Theorem 10.2]{Sierra}\label{vague} Let $B = B(X, \mathcal{L}, \sigma)$ be a twisted homogeneous coordinate ring, where  $X$ is a projective variety, $\mathcal{L}$ is an appropriately ample invertible sheaf on $X$, $\sigma \in \Aut X$, and let $I$ be a right ideal of $B$ corresponding to a closed subscheme $Z \subseteq X$ of infinite order under $\sigma$. Then the noetherianity of $\mathbb{I}_B(I)$ is determined by the orbit of $Z$ under $\sigma$.
\end{theorem}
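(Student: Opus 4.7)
The plan is to exploit the geometric dictionary for $B = B(X, \mathcal{L}, \sigma)$ to translate noetherianity of $R := \mathbb{I}_B(I)$ into a dynamical condition on the $\sigma$-orbit of $Z$. First I would compute the graded pieces of $R$ explicitly: since $B_n = H^0(X, \mathcal{L}_n)$ with $\mathcal{L}_n = \mathcal{L} \otimes \sigma^*\mathcal{L} \otimes \cdots \otimes (\sigma^{n-1})^*\mathcal{L}$, and the right ideal $I$ corresponding to $Z$ has $I_n = H^0(X, \mathcal{L}_n \otimes \mathcal{I}_Z)$, the twisted multiplication $f * g = f \cdot \sigma^{n*}(g)$ forces $f \in B_n$ to lie in $R_n$ precisely when $f$ vanishes on $Z \setminus \sigma^{-n}(Z)$. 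In this way the entire graded structure of $R$ is encoded by the intersection pattern of $Z$ with its $\sigma$-translates.

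Next I would analyse the $R$-bimodule short exact sequences $0 \to I \to R \to R/I \to 0$ and $0 \to R \to B \to B/R \to 0$. Since $B(X,\mathcal{L},\sigma)$ is noetherian on both sides by Artin--Van den Bergh, noetherianity of $R$ is equivalent to finite generation of $B/R$ as the appropriate one-sided $R$-module (in the spirit of a graded refinement of Theorem \ref{rob}, but now for a non-semimaximal $I$). Computing $(B/R)_n$ geometrically, right noetherianity becomes the statement that each new translate $\sigma^{-n}(Z)$ contributes only a bounded amount of fresh data relative to the earlier translates --- a condition on the forward orbit. By a symmetric argument, left noetherianity amounts to the analogous condition on the backward orbit $\{\sigma^{n}(Z)\}_{n \geq 0}$, and the asymmetry of $\sigma$ versus $\sigma^{-1}$ in the twisted multiplication is precisely what lets the two decouple, explaining the example given in the introduction.

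The main obstacle is producing a clean and tight orbit-theoretic criterion, rather than the vague statement quoted. One needs cohomological control coming from the ampleness of $\mathcal{L}$ in order to pass between sheaf-level data (ideal sheaves of orbit translates) and global sections (the graded pieces $R_n$ and $(B/R)_n$), and one needs a precise dynamical notion --- critical density of the orbit, eventual transversality of intersections $Z \cap \sigma^{-n}(Z)$, or similar --- that is simultaneously necessary and sufficient for the required finite generation on each side. Once such a criterion is in place, the desired conclusion that noetherianity of $R$ is governed by the orbit of $Z$ under $\sigma$ follows from the two-sided analysis above.
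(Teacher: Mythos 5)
First, note that the statement you are proving is quoted from Sierra \cite[Theorem 10.2]{Sierra}; the paper does not prove it but only restates it precisely as Theorem \ref{notvague}, so the relevant comparison is with the strategy used there and in the paper's own skew-group-ring analogues (Theorems \ref{R} and \ref{L}). Your computation of the graded pieces of $R=\mathbb{I}_B(I)$ via the twisted multiplication (that $f\in B_n$ lies in $R_n$ exactly when $f$ vanishes on $Z\setminus\sigma^{-n}(Z)$) is correct as far as it goes, but the reduction you propose afterwards is not. The established criterion (Proposition \ref{right-hom}, following Rogalski and Stafford) is not that $B/R$ be finitely generated over $R$: right noetherianity of $R$ is equivalent to $\Hom_B(B/I,B/J)\cong (J:I)/J$ being a noetherian right $R$-module for \emph{every} graded right ideal $J\supseteq I$, which geometrically becomes the condition that for each point $p\in Z$ the return set $\{n\geq 0\mid \sigma^n(p)\in Z\}$ is finite. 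Finite generation of $B/R$ alone does not see the intermediate ideals $J$, and is not the statement one needs to verify.

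More seriously, your claim that left noetherianity is the ``symmetric'' condition on the backward orbit is false, and this is exactly the asymmetry the paper is at pains to exhibit. The left-hand criterion (Proposition \ref{Toriff}) is homological: $\Tor_1^B(B/I,B/K)$ must be a noetherian left $R$-module for all graded left ideals $K$, and since $K$ ranges over ideals coming from \emph{arbitrary} closed subschemes --- not just $Z$ and its translates --- the resulting geometric condition is critical transversality of the whole orbit $\{\sigma^n Z\}_{n\in\ZZ}$ to every subvariety, not a mirror-image orbit-return condition. The two conditions genuinely differ: when $Z$ is a point the idealiser is always right noetherian but is left noetherian only when the orbit is critically dense, and the introduction's example $\mathbb{I}_B((x,y)B)$ is right but not left noetherian even though its forward and backward orbits behave identically. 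A proof along your lines would therefore produce the wrong left-hand criterion; to repair it you must replace the ``backward orbit'' condition with the $\Tor$-based transversality analysis carried out in Section 4 of the paper (and in Sierra's original argument).
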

We state this result precisely in Theorem \ref{notvague} but, whilst we are being vague about the technicalities, it is interesting to note that an algebraic condition which is sometimes very difficult to verify, that of noetherianity, in this case can be understood from a geometric perspective. 

Let $C$ be a commutative domain and let $\ZZ$ act on $C$ by powers of $\sigma \in \Aut C$. Based on the behaviour of idealisers in twisted homogeneous coordinate rings, one naturally conjectures that similar geometric conditions to those in Theorem \ref{vague} control noetherianity of idealisers in $C\mypound\ZZ$ and indeed we verify that this is the case. However, it is difficult to predict what will control noetherianity of idealisers in $C\mypound G$ for an arbitrary group $G$. In this paper we completely answer the question for $G$ being a finitely generated abelian group.

\begin{Definition}
Let $I,J\lhd C$, a commutative ring. Then we say that $V(I)$ and $V(J)$ are {\em homologically transverse} if 
$$\Tor_j^C(C/I,C/J)=0$$
for all $j\geq 1$.
Further, for a group action of $G$ on $C$, we say $\{V(I^g)\}_{g \in G}$ is {\em critically transverse} if for all ideals $K\lhd C$,  $V(I^g)$ and $V(K)$ are homologically transverse for but finitely many $g \in G$.
\end{Definition}
Our main results are as follows:
\begin{theorem}[Theorem \ref{R}]\label{T1}
Let $C$ be a commutative noetherian domain, let $G$ be a finitely generated abelian group acting on $C$ and let $B = C\mypound G$. Let $I\lhd C$ be a prime ideal with trivial stabiliser under the $G$-action on $C$. Then the following are equivalent:
\begin{enumerate}
    \item[(1)] for all points $p \in V(I)$ the set
    $$\{h\in G \mid p^h \in V(I)\}$$
    is finite;
    \item[(2)] $\mathbb{I}_B(IB)$ is right noetherian.
\end{enumerate}
\end{theorem}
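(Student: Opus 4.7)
The plan is to first compute $T := \mathbb{I}_B(IB)$ explicitly, then use condition (1) in each direction either to construct an infinite strictly ascending chain of right ideals or to force stabilization. Since $I$ is prime with trivial $G$-stabilizer, a height argument gives $g(I) \not\subseteq I$ for every $g \neq e$ (otherwise $g(I) = I$). An element $cg$ idealizes $IB$ precisely when $c \cdot g(I) \subseteq I$, and primeness of $I$ forces $c \in I$ for $g \neq e$. Therefore
$$T = C \oplus \bigoplus_{g \neq e} I \cdot g,$$
a $G$-graded subring of $B$ in which $IB$ is a two-sided ideal with quotient $T/IB \cong C/I$.

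For $(2) \Rightarrow (1)$ I would argue by contrapositive. Assuming (1) fails, fix $p \in V(I)$ and an infinite sequence of distinct $h_1, h_2, \ldots \in G$ with $p^{h_i} \in V(I)$, equivalently $h_i(I) \subseteq \mathfrak{m}_p$. For any nonzero $x \in I$, set $N^{(n)} = \sum_{i \leq n} (x \cdot h_i) T$. A $G$-degree calculation shows the $h_{n+1}$-component of $N^{(n)}$ equals $\bigl(x \cdot \sum_{i \leq n} h_i(I)\bigr) \cdot h_{n+1}$, which is a proper subset of the $h_{n+1}$-component $(x) \cdot h_{n+1}$ of $N^{(n+1)}$, because $\sum_{i \leq n} h_i(I) \subseteq \mathfrak{m}_p \subsetneq C$ and $C$ is a domain. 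Hence $\{N^{(n)}\}$ is an infinite strictly ascending chain of right $T$-ideals.

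For $(1) \Rightarrow (2)$ I would first deduce from (1) that $\bigcap_{h \in G} V(h(I)) = \emptyset$, since any point in this intersection would lie in $V(I)$ and satisfy $p^h \in V(I)$ for every $h$. Quasi-compactness of $\spec C$ then supplies finitely many $h_1, \ldots, h_n \in G$ with $\sum_i h_i(I) = C$, and a direct $G$-degree calculation yields $\sum_i (1 \cdot h_i) T = B$, so $B$ is finitely generated as a right $T$-module. Combined with the short exact sequence $0 \to IB \to T \to C/I \to 0$, right noetherianity of $T$ reduces to showing that $B$ itself is noetherian as a right $T$-module. For this, I would work with $G$-graded $T$-submodules of $B$, which take the form $\bigoplus_g K_g \cdot g$ for ideals $K_g \subseteq C$ subject to the compatibility $K_{g'} \supseteq K_g \cdot g(I)$ for all $g \neq g'$; I would use (1) to show no such chain can strictly ascend forever, then reduce the general ungraded case to the graded one via the $G$-filtration.

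The main obstacle is this last step: proving $B$ is actually noetherian, not merely finitely generated, as a right $T$-module. A right $T$-submodule of $B$ is neither $G$-graded nor stable under right multiplication by $B$, so the noetherianity of $B$ over itself does not directly apply. Condition (1) must be used to control how the $G$-graded pieces of such a submodule interact across different group elements --- precisely ruling out the kind of chain constructed in the reverse implication --- and making this combinatorial analysis rigorous is where the bulk of the technical work lies.
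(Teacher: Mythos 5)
Your computation of $T=\mathbb{I}_B(IB)=C\oplus\bigoplus_{g\neq e}Ig$ is correct and matches the paper (Lemma \ref{idealiser} plus the remark following it), and your $(2)\Rightarrow(1)$ argument is sound: the right ideals $N^{(n)}$ do lie in $T$ (since $x\in I$), and the degree-$h_{n+1}$ component jumps from $x\sum_{i\leq n}I^{h_i}h_{n+1}$ to $(x)h_{n+1}$ because $\sum_{i\leq n}I^{h_i}\subseteq\mathfrak{m}_p\subsetneq C$. This is a direct chain version of what the paper does, which instead exhibits $\bigoplus_{g\in S_{G,p}}(C/\mathfrak{m}_p)g$ as a non-finitely-generated right module over $R/IB\cong C/I$ and invokes Theorem \ref{STPrime}; the two are equivalent in substance.

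The genuine gap is in $(1)\Rightarrow(2)$, and it is exactly where you say it is: finite generation of $B$ as a right $T$-module together with the sequence $0\to IB\to T\to C/I\to 0$ does not yield noetherianity, and the ACC you need on families $(K_g)_{g\in G}$ of ideals with $K_{g'}\supseteq K_gI^g$ is the entire mathematical content of the implication --- you have set it up but not proved it, and it is not a routine verification (condition (1) controls only the interaction of the $K_g$ with $V(I)$, not arbitrary ascents within each $K_g$ across infinitely many $g$ simultaneously). The paper avoids attacking this head-on by routing through general idealiser machinery: Lemma \ref{B/I} (a graded version of Stafford's lemma) shows it suffices that $B/IB$ be a graded-noetherian right $R$-module; Proposition \ref{right-hom} (a graded version of Rogalski's criterion) refines this to showing that $\Hom_B(B/IB,B/JB)\cong(JB:IB)/JB$ is a graded-noetherian right $R$-module for every graded right ideal $JB\supseteq IB$; and Theorem \ref{STPrime} reduces to $J=P$ prime via a prime filtration of $C/J$ and the long exact sequence. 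After these reductions the problem becomes the finite, checkable computation $(PB:IB)/PB=\bigoplus_{g\in S_{G,V(P)}}(C/P)g$, where $S_{G,V(P)}=\bigcap_{p\in V(P)}S_{G,p}$ is finite by hypothesis (1) since $V(P)\subseteq V(I)$, so the module is a finite direct sum of cyclic right $C/I$-modules and hence noetherian. To complete your proof you would need either to import this chain of lemmas (in which case your argument collapses into the paper's) or to prove your ACC claim on graded $T$-submodules of $B$ directly, which amounts to reproving Lemma \ref{B/I} and Proposition \ref{right-hom} in this special case. A further small caveat: your deduction that $\bigcap_{h\in G}V(I^h)=\emptyset$ requires $G$ infinite (if $G$ is finite, condition (1) is vacuous and noetherianity holds for other reasons), and in any case the finite generation of $B$ over $T$ that you extract from it is not used in the paper's route.
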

\begin{theorem}[Theorem \ref{finalLeft}]\label{T2}
With the same setup as Theorem \ref{T1} the following are equivalent:
\begin{enumerate}
    \item[(1)] $\{V(I^g)\}_{g \in G}$ is critically transverse;
    \item[(2)] $\mathbb{I}_B(IB)$ is left noetherian.
\end{enumerate}
\end{theorem}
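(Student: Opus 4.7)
The plan is to reduce left noetherianity to a statement about $IB$ as a left $R$-module, and to show that critical transversality is exactly the homological condition needed. Under the trivial-stabilizer hypothesis, primality of $I$ forces $R = \mathbb{I}_B(IB) = Ce \oplus \bigoplus_{g \neq e} Ig$, so $IB$ is a $G$-graded two-sided ideal of $R$ with $R/IB \cong C/I$. Since $C$ is noetherian, $C/I$ is noetherian as a left $R$-module, and the short exact sequence $0 \to IB \to R \to C/I \to 0$ implies that $R$ is left noetherian if and only if $IB$ is noetherian as a left $R$-module. The theorem thus becomes a statement about left $R$-submodules of $IB$.

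I would next set up the graded structure. A $G$-graded left $R$-submodule of $IB$ takes the form $N = \bigoplus_g N_g \cdot g$ with $N_g \subseteq I$ a $C$-submodule, and the $R$-action yields the closure relation $I \cdot \sigma_h(N_g) \subseteq N_{hg}$ for all $g \in G$ and $h \neq e$. Reduction from general to graded left $R$-submodules is handled by standard filtration arguments that lift $G$ to a quotient of $\ZZ^n$, totally order, and pass to associated graded. For \textbf{(1) $\Rightarrow$ (2)}, given a graded $N \subseteq IB$, form $BN$, a left ideal of the noetherian ring $B = C\mypound G$; this is finitely generated, $BN = Bn_1 + \cdots + Bn_k$ with $n_i \in N$. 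The key step is a position-wise calculation showing that the ``defect'' $(Bn_i \cap IB)/Rn_i$ at position $m$ is controlled by Tor groups of the form $\Tor_j^C(C/I^{mg_i^{-1}}, C/K_i)$, where $K_i$ is an ideal attached to the coefficients of $n_i$. Critical transversality then guarantees that for each $K_i$ only finitely many positions $m$ contribute nonzero Tor, so finitely many extra generators suffice to augment $\{n_i\}$ and recover all of $N$.

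For \textbf{(2) $\Rightarrow$ (1)}, I would argue by contrapositive: assuming critical transversality fails, pick $K \lhd C$ and an infinite set $S \subseteq G$ with $\Tor_j^C(C/I^h, C/K) \neq 0$ for each $h \in S$ and some $j \geq 1$. Nonzero Tor classes give witness elements in $(I^h \cap K) \setminus I^h K$ (and Koszul-type witnesses when $j > 1$). I would use these witnesses to construct, position by position, a strictly ascending chain of graded left $R$-submodules of $IB$ in which each new stage $h \in S$ forces an enlargement at position $h$ that cannot be produced from the closure relation applied to previously chosen data, contradicting left noetherianity.

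The main obstacle is making precise the correspondence between the defect $(Bn_i \cap IB)/Rn_i$ and the $\Tor$ terms appearing in the definition of critical transversality. This requires a careful position-wise analysis of how the left $R$-action mixes the $G$-grading of $B$, how the shifted ideals $I^h$ surface at each position, and how the resulting obstruction groups identify with $\Tor_j^C(C/I^h, C/K_i)$; higher Tors (not just $\Tor_1$) must be tracked, which is why the full critical transversality hypothesis is needed. A secondary difficulty is performing the reduction to graded submodules uniformly across the finitely generated abelian group $G$.
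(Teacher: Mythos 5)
Your overall architecture --- reduce left noetherianity of $R$ to finite generation of graded left $R$-submodules of $IB$, identify the obstruction position-by-position with Tor groups over $C$, and then invoke the transversality hypothesis --- has the right shape, and its first half matches the paper: Proposition \ref{Toriff} and Theorem \ref{tor} show that $R$ is left noetherian if and only if $\bigoplus_{g\in G}\Tor_1^C(C/I,C/P^g)g$ is a finitely generated left $R$-module for every prime $P\lhd C$, which under the trivial-stabiliser hypothesis amounts to finiteness of $\{g\in G \mid \Tor_1^C(C/I^{-g},C/P)\neq 0\}$. The genuine gap is in how you bridge this to critical transversality, which quantifies over \emph{all} $\Tor_j$, $j\geq 1$. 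The ring-theoretic obstruction is purely a $\Tor_1$: the relevant module is $\Tor_1^B(B/IB,B/BJ)\cong (IB\cap BJ)/IBJ$, and higher Tors simply do not appear in the defect $(Bn_i\cap IB)/Rn_i$ you describe. Consequently your $(2)\Rightarrow(1)$ step fails as stated: if critical transversality fails only because some $\Tor_j$ with $j\geq 2$ is nonzero for infinitely many $h$ while $\Tor_1$ vanishes for all but finitely many, your ``Koszul-type witnesses'' do not yield elements of $(I^{h}\cap K)\setminus I^{h}K$, and there is no visible strictly ascending chain of graded left $R$-submodules of $IB$ --- you would be trying to contradict noetherianity with data the module structure cannot see. (Your remark that ``higher Tors must be tracked, which is why the full critical transversality hypothesis is needed'' has the logic backwards: only the $\Tor_1$ part of the hypothesis is used in the forward direction too.)

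The missing idea is the content of Lemma \ref{CTLN}, together with Lemma \ref{PD} (due to Hochster): one must show that finiteness of the $\Tor_1$-nonvanishing sets for all primes --- extended to all finitely generated $C$-modules by a prime filtration, and to all $j$ by a syzygy induction --- forces $V(I)$ to be homologically transverse to every $G$-invariant subvariety, in particular to the singular stratification of $\spec C$, whence $\pdim_C(C/I)<\infty$ and the sets $A_j(J)$ vanish for $j>\pdim_C(C/I)$. Only then is the finite union $A(J)=\bigcup_{j\geq 1}A_j(J)$ finite, i.e.\ only then is the $\Tor_1$ condition equivalent to full critical transversality. Without this step the equivalence of your conditions (1) and (2) is not established.
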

We give a complete characterisation of noetherianity for idealisers at prime ideals of $C$ with a more general group action in the body of the paper.

Understanding when the conditions in Theorems \ref{T1} and \ref{T2} hold can be subtle. For example, let $C = \CC[x,y]$ and let $\ZZ^2$ act on $\CC[x,y]$ by translation. Then $\mathbb{I}_B((x-7y^2-1)B)$ is neither left nor right noetherian. However, if $f\in \CC[x,y]$ defines either an irreducible curve of genus $\geq 1$ or a line, then $\mathbb{I}_B(IB)$ is left and right noetherian. Full details of this example may be found in Section $5$.

In the first section and second section, we focus on group rings of polycyclic-by-finite groups and that working with graded rings simplifies results from the literature. In the fourth section we prove Theorems \ref{T1} and \ref{T2} and in the final section we give the details of the example mentioned above which raises an interesting question in number theory.

\textbf{Acknowledgements.}
The author is an EPSRC-funded student at the University of Edinburgh,
and the material contained in this paper will form part of her PhD thesis. The author would like to thank her supervisor Susan J. Sierra for suggesting this problem and providing guidance, and also to the EPSRC.

\section{Preliminaries}
The aim of this paper is to generalise theorems in the literature about idealisers in twisted homogeneous coordinate rings to some other well-chosen noncommutative situations.  In this section we give definitions and results which we will need. Our idealisers will be group-graded, and we begin by discussing the noetherianity of group-graded rings.

Chin and Quinn \cite{CQ} show that if $G$ is a polycyclic-by-finite group and $R$ is a $G$-graded ring, then all $G$-graded right ideals of $R$ are finitely generated if and only if $R$ is right noetherian. We note that it is still an open question as to whether this holds for rings graded by arbitrary groups. Indeed, it is even still a question as to whether $R\mypound G$, the group ring of $G$, can be noetherian when $G$ is not polycyclic-by-finite. Hence it is clearly not reasonable for us to consider the noetherianity of rings graded by non-polycyclic-by-finite groups.  

The proof by Chin and Quinn \cite{CQ} is rather inexplicit, so we begin with a direct proof in the case that $G$ is a finitely generated abelian group. This generalises a reult of Bj\"ork \cite[Theorem 2.18]{Bjork}.
\begin{proposition}
Let $G$ be a finitely generated abelian group and let $R$ be a $G$-graded ring. If all homogeneous right (left) ideals of $R$ are finitely generated then $R$ is right (left) noetherian.
\end{proposition}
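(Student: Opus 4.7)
My plan is to adapt Bj\"ork's leading-term proof \cite[Theorem 2.18]{Bjork}. Decompose $G\cong\ZZ^n\oplus T$ with $T$ finite abelian, equip $\ZZ^n$ with the translation-invariant lexicographic order $\leq$, and let $\pi\colon G\twoheadrightarrow\ZZ^n$ denote the projection. For each $0\neq r\in R$ set $d(r)=\max_\leq\pi(\mathrm{supp}(r))\in\ZZ^n$ and call the finite collection of $G$-homogeneous components $\{r_{(d(r),t)}\}_{t\in T}$ the \emph{leading pieces} of $r$. For a right ideal $J\subseteq R$, let $\widetilde L(J)$ denote the right $R$-submodule of $R$ generated by all the leading pieces of nonzero elements of $J$.

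The first step is to verify that $\widetilde L(J)$ is a $G$-graded right ideal of $R$. Given a leading piece $\alpha=r_{(d(r),t)}$ and a $G$-homogeneous element $x\in R_{(w,s)}$, the $|T|$ products $r_{(d(r),t')}x$ lie in pairwise distinct $G$-degrees $(d(r)+w,t'+s)$ as $t'$ varies, so they cannot cancel one another; a short direct computation then shows that if $\alpha x\neq 0$, then $d(rx)=d(r)+w$ and $\alpha x$ is itself the leading piece of $rx\in J$ at $G$-degree $(d(rx),t+s)$. Hence $\widetilde L(J)$ is closed under right $R$-multiplication and is $G$-graded by construction; by the hypothesis of the proposition, it is finitely generated by $G$-homogeneous elements which may be taken of the form $\rho_i=r^{(i)}_{(v_i,t_i)}$ with $r^{(i)}\in J$, $i=1,\ldots,k$.

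I would then show that the $r^{(i)}$'s generate $J$ by descent on $d$. Given $r\in J$ with $d(r)=v$, for each $t\in T$ the leading piece $r_{(v,t)}\in\widetilde L(J)_{(v,t)}$ decomposes as $\sum_i\rho_i y_{i,t}$ with $G$-homogeneous $y_{i,t}\in R_{(v-v_i,t-t_i)}$; assembling all the $|T|$ decompositions into a single subtraction $r-\sum_{i,t}r^{(i)}y_{i,t}\in J$ of an element of $r^{(1)}R+\cdots+r^{(k)}R$ should yield some $r'\in J$ with $d(r')<v$, and one then iterates the reduction.

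The main technical obstacle is the torsion part. Each $r^{(i)}$ generally has leading pieces in \emph{every} $T$-slot $(v_i,s)$, not only in the slot $(v_i,t_i)$ indexed by the generator $\rho_i$, so the subtraction intended to cancel $r_{(v,t)}$ also perturbs the other leading pieces $r_{(v,t')}$ through the cross-products $r^{(i)}_{(v_i,s)}y_{i,t}$ with $s\neq t_i$. These cross-products themselves lie in $\widetilde L(J)$ and hence can be expressed via the $\rho_j$'s, so the $y_{i,t}$ must be chosen to simultaneously satisfy a system of $|T|$ coupled equations; the argument here requires exploiting the freedom in each decomposition (unique only up to syzygies of the $\rho_i$'s) and using the $G$-gradedness of $\widetilde L(J)$ to separate contributions across $T$-slots. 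Termination of the lex descent on $d$ is a secondary delicate point, handled by observing that the supports of the successive remainders stay in a bounded region of $\ZZ^n$ determined by $r$ and the fixed finite set of $r^{(i)}$'s.
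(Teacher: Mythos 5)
Your overall strategy is genuinely different from the paper's: the paper inducts on $\rank(G)$, handling the finite part of $G$ by showing directly that each component $R_g$ is a noetherian right $R_0$-module, and peeling off one copy of $\ZZ$ at a time via Bj\"ork's \emph{external homogenisation} in $R[t]$ with the positive filtration $F_w=R+Rt+\cdots+Rt^w$. The point of that detour is that it replaces a descent along a non-well-ordered grading group by an \emph{ascending} chain of $G$-graded right ideals $J(0)\subseteq J(1)\subseteq\cdots$, to which the hypothesis applies directly. Your leading-term argument skips this and breaks at exactly the place the homogenisation is designed to repair. The core claim of your descent step --- that if the leading pieces of $r^{(1)},\dots,r^{(k)}\in J$ generate $\widetilde L(J)$ then the $r^{(i)}$ generate $J$ --- is false. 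Take $n=1$, $T=0$, $R=\kk[x^{\pm1}]$ with its $\ZZ$-grading, and $J=R$. Then $\widetilde L(J)=R$ is generated by the single homogeneous element $\rho_1=x$, which is the leading piece of $r^{(1)}=x+1\in J$; but $(x+1)R\neq R$ because $x+1$ is not a unit. Running your reduction on $r=1$ produces the remainders $-x^{-1},\ x^{-2},\ -x^{-3},\dots$, whose leading degrees decrease forever: the lexicographic order on $\ZZ^n$ is not a well-order, the finite sets $\pi(\mathrm{supp}(r^{(i)}))-v_i$ are only lex-nonpositive rather than zero, and so each reduction step can (and here does) introduce new support strictly below the current leading degree. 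Your proposed safeguard --- that the supports of successive remainders stay in a bounded region determined by $r$ and the $r^{(i)}$ --- already fails in this example.

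The second difficulty you flag, the coupling of the $\abs{T}$ slots at the top lex degree, is likewise not resolved by the syzygy argument you sketch; the clean way out is the paper's base case, which proves from ACC on $G$-graded right ideals that each $R_g$ is a noetherian right $R_0$-module, and thereby disposes of the finite part of $G$ before any leading-term analysis begins. If you want to retain a leading-term flavour for the free part, the viable repair is Bj\"ork's: homogenise $L$ to $L^*\subseteq R[t]$, pass to $\gr_F R[t]$, observe that the resulting ideals $J(w)$ are $G$-graded and form an ascending chain which stabilises by hypothesis, and only then run the Hilbert-basis division --- noting that even there one must adjoin the low-degree part $\sigma(L^*)\cap\sum_{i=0}^{m-1}t^iR[t]$ as extra generators, which is precisely the correction your argument is missing.
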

\begin{proof}
We proceed by induction on $\rank(G) = n$.
Suppose initially that $n=0$, then $G = \{0,g_1\,\dots,g_m\}$ is finite and $R = R_0\oplus \left(\bigoplus_{i=1}^mR_{g_i}\right)$. We show that each $R_g$ is finitely generated as a right $R_0$-module. Indeed, let $I_1 \subsetneq I_2 \subsetneq \dots$ be a strictly ascending chain of right $R_0$-submodules in $R_g$, then $I_1R\subseteq I_2R \subseteq\dots$ is an ascending chain of $G$-graded right ideals in $R$. Further, since $(I_jR)_g = I_j$ this chain in strictly ascending and so must stabilise, hence $R_g$ is a noetherian right $R_0$-module. Thus, as $G$ is a finite group, $R$ is finitely generated as a right $R_0$-module and hence is right noetherian.

So we have proved the case when $n=0$. Now we suppose that $n>0$. Then $G$ contains a normal subgroup isomorphic to $\ZZ$; abusing notation we write $\ZZ \lhd G$. First we show that $R$ is $G/\ZZ$-graded. For an element $x \in G$, let $\bar{x}$ denote its image in $G/\ZZ$ and let $R_{\bar{x}} = \oplus_{a\in \ZZ}R_{x+a}$. Let $\bar{g},\bar{h}\in G/\ZZ$. Then,
\begin{align*}
R_{\bar{g}}R_{\bar{h}} &= \left(\bigoplus_{a\in \ZZ}R_{g+a} \right)\left(\bigoplus_{b\in \ZZ}R_{h+b} \right) =  \sum_{a,b\in \ZZ}R_{g+a}R_{h+b} \\ &\subseteq\sum_{a,b \in \ZZ}R_{g+a+h+b} \\
&= \sum_{a,b \in \ZZ}R_{(g+h)+a+b} = R_{\bar{g+h}}, \quad \textrm{ as needed.}
\end{align*}
We next prove the claim:

if $R$ has ACC on $G$-graded right ideals then that $R$ has ACC on $G/\ZZ$-graded ideals.
\noindent
We use the general method of Bj\"ork's proof \cite[Theorem 2.18]{Bjork}. Let $L \leq R_R$ be a $G/\ZZ$-graded right ideal and let $X \subseteq G$ be a set of coset representatives for $G/\ZZ$ so then 
$L  = \bigoplus_{x \in X}L_{\bar{x}}$. Let $t$ be an indeterminate. We begin by constructing the {\em external homogenisation} of a $G/\ZZ$-homogeneous element $z \in L$. We have $z\in L_{\bar{g}}$ for some $g \in X$, and we write $z = \sum_{h\in \ZZ}z_{g+h}$. Let $v = v(z) = \max \{h \in \ZZ\mid z_{g+h}\neq 0\}$ and define $z^* = \sum_{h \in \ZZ}z_{g+h}t^{v-h}\in R[t]$. Then define $L^* = \left<z^*\mid z \in L \right>$ as a right ideal of $R[t]$. We note that $L^*$ is $G$-graded, under the grading on $R[t]$ where $R$ is $G$-graded and $t$ is given degree $1\in \ZZ \leq G$. Under this grading, for each $z \in L_{\bar{g}}$, $z^*$ will be $G$-homogeneous of degree $g+v(z)$ and hence $L^*$ will be $G$-graded as it is generated by these elements.

There is a positive filtration $F_{\bullet}$ on $R[t]$ with $F_w = R+Rt+\dots +Rt^w$. Now consider the image $\sigma(L^*)$ in $\gr_{F}(R[t])$. Then $\sigma(L^*) = \bigoplus_{w \in \NN}(L^*\cap F_{w})/(L^*\cap F_{w-1}) \cong \bigoplus_{w \in \NN} J(w)t^w$. We observe that the $J(w)$ are right ideals in $R$ such that $J(w)\subseteq J(w+1)$. Further, these right ideals $J(w)$ are $G$-graded. Indeed, by virtue of coming from $L$, the $J(w)$ are $G/\ZZ$-graded. Now suppose $h = \sum_{i=1}^nh_i \in J(w)$ such that $h_i\in R_{g+a_i}$ where the $a_i\in \ZZ$ are distinct. Then $$\left(\sum_{i=1}^nh_i\right)t^w+[\textrm{lower powers of } t ]\in L^*.$$

Recall that $L^*$ is $G$-graded and, letting $N_i = g+a_i+w$, we see
$$\left(\left(\sum_{i=1}^nh_i\right)t^w+[\textrm{lower powers of }t]\right)_{N_i} = h_it^w+[\textrm{lower powers of } t ]_{N_i}\in L^*$$
and hence 
$$\sigma\left(h_it^w+\left([\textrm{lower powers of }t ]\right)_{N_i}\right) = h_it^w \in \sigma(L^*).$$

Thus we have an ascending chain of $G$-graded right ideals of $R$, $J(0) \subseteq J(1) \dots $, which stabilises to $J_{\infty}$ by assumption. 

We claim this is enough to show that $\sigma(L^*)$ is finitely generated. Indeed we reproduce the standard argument from Hilbert's basis theorem. Let $a_1,\dots, a_n$ be a finite generating set for $J_{\infty}$ and let $f_i \in \sigma(L^*)$ such that $\lt(f_i) = a_i$ where $\lt(g)$ is the coefficient of the highest power of $t$ in $g$. Without loss of generality we may assume $\Deg_t(f_i) = m$ for all $i$ (else, if $m'$ is the maximum of the $t$-degrees of the $f_i$ then we may replace any $f$ of lower $t$-degree with $f_it^{m'-\Deg_t(f_i)}$ Let $L_0 = \left(\sigma(L^*) \cap \sum_{i=0}^{m-1}t^iR[t]\right)+\sum_{i=1}^nf_iR[t]$, then we claim $L_0 = \sigma(L^*)$. Suppose not, obviously $L_0 \subseteq \sigma(L^*)$ so let $f \in \sigma(L^*)\setminus{L_0}$ be of minimal $t$-degree $d \geq m$. Since $f \in \sigma(L^*)$, $\lt(f) \in J_{\infty}$ and as such $\lt(f) = \sum_{i=1}^na_ir_i$ for some $r_i \in R$. Consider $g = f-\sum_{i=1}^nf_ir_it^{d-m}$, then this cancels out the leading term of $f$ and since $\Deg_t(g)<\Deg_t(f)$, by minimality $f = \sum_{i=1}^nf_ir_it^{d-m} \in L_0$, as required. Hence $\sigma(L^*)$ is finitely generated. Since $F$ is a positive filtration, by \cite[Proposition 2.11]{Bjork}, $L^*$ is finitely generated. 

We now define $\phi: R[t] \to R$ by $\phi (\sum_w t^wx_w) = \sum_w x_w$ (i.e. mapping $t$ to $1$). This is a surjective ring homomorphism and $L = \phi(L^*)$, hence $L$ is finitely generated.

So through this we have proved that $R$ being right $G$-graded-noetherian implies that $R$ is right $G/\ZZ$-graded noetherian. As $\rank(G) > \rank(G/\ZZ)$, by induction on $\rank(G)$ we conclude that $R$ is right noetherian.
\end{proof}
We have the following useful lemma.
\begin{lemma}\label{noeth}
Let $A\subseteq A'$ be subrings of a domain $D$. Suppose that $A$ is right noetherian and that $A$ contains a non-zero right ideal of $D$. Then $A'$ is right noetherian.
\end{lemma}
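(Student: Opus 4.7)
The plan is to leverage a single non-zero element of the right ideal of $D$ sitting inside $A$ to transport ascending chains of right ideals from $A'$ into $A$, where noetherianity kicks in.

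Concretely, let $J$ be the non-zero right ideal of $D$ with $J \subseteq A$, and pick any $0 \neq j \in J$. The key observation is that for any right ideal $K$ of $A'$, we have
\begin{equation*}
jK \;\subseteq\; jA' \;\subseteq\; jD \;\subseteq\; J \;\subseteq\; A,
\end{equation*}
using that $J$ is a right ideal of $D$ and $A' \subseteq D$. Moreover $jK$ is closed under right multiplication by $A$, because $A \subseteq A'$ and $K$ is a right ideal of $A'$: $(jK)\cdot A \subseteq j(K\cdot A') \subseteq jK$. Thus $jK$ is an honest right ideal of $A$.

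Now suppose $K_1 \subseteq K_2 \subseteq \cdots$ is an ascending chain of right ideals in $A'$. Applying the above, $jK_1 \subseteq jK_2 \subseteq \cdots$ is an ascending chain of right ideals of $A$, which stabilises by hypothesis. Since $D$ is a domain and $j \neq 0$, left multiplication by $j$ on $D$ is injective, so $jK_m = jK_{m+1}$ forces $K_m = K_{m+1}$. Therefore the original chain stabilises and $A'$ is right noetherian.

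There is no real obstacle here; the whole content is the trick of multiplying by $j$ to embed right ideals of $A'$ into right ideals of $A$ while using the domain hypothesis to invert this operation at the level of chains. The only point requiring minor care is verifying both that $jK$ lands inside $A$ (which uses the $D$-ideal property of $J$) and that it is stable under right multiplication by $A$ (which uses $A \subseteq A'$).
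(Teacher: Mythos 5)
Your proof is correct and uses the same key idea as the paper: multiply by a fixed nonzero element of $J$ to push data from $A'$ into $A$, then use the domain hypothesis to invert this. The only (cosmetic) difference is that the paper packages the argument as ``$A'\cong yA'$ is a finitely generated right $A$-module, hence $A'$ is right noetherian,'' whereas you apply the multiplication trick directly to ascending chains of right ideals of $A'$; both are valid.
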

\begin{proof}
Let $0 \neq J \leq D_D$ such that $J \subseteq A$ and let $0 \neq y \in J$. Then we observe that $yA' \subseteq JD \subseteq A$, so $yA'$ is a right ideal of $A$, and hence is a finitely generated right $A$-module. As $D$ is a domain, $A'\cong yA'$ as  right $A$-modules and hence $A'$ is a finitely generated $A$-module and so is right noetherian.
\end{proof}
Now we give some general results from ring theory which will be used for the proofs of Theorems \ref{T1} and \ref{T2}. The starting point for the treatment of both left and right noetherianity is to generalise results of Rogalski and Stafford to the $G$-graded setting where $G$ is a polycyclic-by-finite group. Note that the question of whether the noetherianity of $G$-graded rings is completely controlled by homogeneous ideals is still open for an arbitrary group $G$. Hence we only consider rings graded by polycyclic-by-finite groups as those are the only ones whose noetherianity may be confirmed by checking homogeneous ideals. 

\begin{lemma}(cf.\cite[Lemma 1.1]{Sta})\label{B/I}
Let $G$ be a polycyclic-by-finite group and let $B$ be a right noetherian $G$-graded ring. Let $I \leq_\textrm{gr} B_B$ and let $R=\mathbb{I}_B(I)$. Suppose further that $B/I$ is a graded-noetherian right $R$-module. Then $R$ is a right noetherian ring.
\end{lemma}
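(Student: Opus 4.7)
The plan is to invoke the Chin--Quinn reduction discussed at the start of the section: since $G$ is polycyclic-by-finite, it suffices to show that every $G$-graded right ideal of $R$ is finitely generated, equivalently that $R$ satisfies ACC on graded right ideals. So I take an arbitrary ascending chain $J_1 \subseteq J_2 \subseteq \cdots$ of graded right ideals of $R$ and aim to show that it stabilises.

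The first step is to pass up to $B$. The chain $\{J_n B\}$ is an ascending chain of graded right $B$-ideals, and since $B$ is right noetherian it stabilises, say $J_n B = J_N B$ for all $n \geq N$. Because $J_N B$ is finitely generated as a graded right $B$-module, I may choose homogeneous generators $x_1, \ldots, x_k \in J_N$ with $\deg x_i = d_i$.

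The key step is then to construct a surjection of graded right $R$-modules
$$\bigoplus_{i=1}^k (B/I)[-d_i] \twoheadrightarrow J_N B / J_N, \qquad (b_1 + I, \ldots, b_k + I) \longmapsto \sum_{i=1}^k x_i b_i + J_N.$$
Well-definedness hinges on the fact that $I$ is a two-sided ideal of $R$ and $x_i \in J_N \subseteq R$, so $x_i I \subseteq J_N I \subseteq J_N$ for each $i$. Hence $J_N B / J_N$ is a graded quotient of a finite direct sum of shifts of $B/I$. Since $B/I$ is graded-noetherian as a right $R$-module by hypothesis, and graded-noetherianity is preserved under finite direct sums, degree shifts, and graded quotients, $J_N B / J_N$ is itself graded-noetherian as a right $R$-module.

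To finish, observe that for every $n \geq N$ we have $J_N \subseteq J_n \subseteq J_n B = J_N B$, so $J_n / J_N$ is a graded right $R$-submodule of $J_N B / J_N$. The ascending chain $\{J_n / J_N\}_{n \geq N}$ therefore stabilises, which forces $\{J_n\}$ to stabilise. Applying Chin and Quinn then yields that $R$ is right noetherian. The only delicate point is tracking the $G$-grading carefully in the surjection above (via the shifts $[-d_i]$) and verifying that graded-noetherianity behaves as expected under the standard module operations; everything else is a direct graded analogue of Stafford's original ungraded argument in \cite[Lemma 1.1]{Sta}.
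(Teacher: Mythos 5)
Your argument is correct and is essentially the paper's proof recast in ACC form: both reduce to graded right ideals via Chin--Quinn, use right noetherianity of $B$ to produce finitely many homogeneous generators, and then conclude from a surjection of a finite direct sum of (shifts of) $B/I$ onto a graded right $R$-module that contains the ideal(s) in question. The only organizational difference is that the paper fixes a single graded ideal $J$ and maps onto $JB/JI$ (which requires arranging a common homogeneous generating set for $JB$ and $JI$ before concluding $J=\sum_\ell u_\ell R+\sum_i s_i I$ is finitely generated), whereas you run a chain argument and quotient by $J_N$ itself, with well-definedness coming from $J_N I\subseteq J_N R\subseteq J_N$; both are faithful graded versions of Stafford's Lemma 1.1.
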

\begin{proof}
This is a graded version of \cite[Lemma 1.1]{Sta}.
Let $J \leq_{\mathrm{gr}}R_R$. Then $JB$ and $JI$ are graded right ideals of $B$, thus are finitely generated, and $JB \geq JR = J \geq JI $. We can write $JB = \sum_{i=1}^ns_iB$ and $JI = \sum_{j=1}^m r_j I$ where $s_i, r_j \in J$. We note that we may actually assume $\{s_i\}=\{r_j\}$. Indeed, let $\{t_k\} = \{s_i\} \cup \{r_j\}$. Then $\{t_k\}$ generates $JB$, but we also note that $JI = \sum_j r_j I \subseteq \sum_k t_k I \subseteq JI$ so the $\{t_k\}$ also generate $JI$. So without loss of generality $m=n$ and $s_i = r_i$ for all $i = 1,\dots, n$.

Then we have a surjection
\[(B/I)^n \xtwoheadrightarrow{(s_1, \dots, s_n)} JB/JI \supseteq J/JI.\]

Hence, since $B/I$ is graded-noetherian, $J/JI$ is a finitely generated right $R$-module. Say $J/JI = \sum_{\ell=1}^p(u_\ell +JI)R$ for some $u_{\ell}\in J$ . Then observe that $J = \sum_\ell u_\ell R + \sum_i s_i I \subseteq  \sum_\ell u_\ell R + \sum_i s_i R \subseteq J$, so $J$ is finitely generated. Hence, by \cite{CQ}, $R$ is right noetherian.
\end{proof}
\begin{Definition}\label{quot}
Let $I,J$ be right ideals of a ring $B$. Define
$$(J:I) = \{b \in B \mid bI\subseteq J\}$$
to be the {\em ideal quotient}.
\end{Definition}
We alert the reader that this is a symmetric notation for an asymmetric concept. We will not use the corresponding left-handed version. We also observe that if $I$ and $J$ are graded right ideals of $B$, then $(J:I)$ will be graded as well.
\begin{proposition}(cf.\cite[Proposition 2.1]{Rog})\label{right-hom} Let $G$ be a polycyclic-by-finite group and let $I$ be a graded right ideal of a $G$-graded right noetherian ring $B$ which is a domain. Let $R = \mathbb{I}_B(I)$. Then the following are equivalent:
\begin{enumerate}
    \item[(1)] $R$ is right noetherian.
    \item[(2)] For every graded right ideal $J\supseteq I$ of $B$, $\Hom_B(B/I,B/J)$ is a right graded-noetherian $R$-module (or $R/I$-module).
\end{enumerate} 
\end{proposition}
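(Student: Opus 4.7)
The plan starts from the standard identification $\Hom_B(B/I,B/J)\cong (J:I)/J$ as right $R$-modules, via $\phi\mapsto\phi(1+I)$. Under this the right $R$-action reads $(b+J)\cdot r=br+J$, which is well-defined because $R\subseteq(J:I)$ (since $RI\subseteq I\subseteq J$) and because $JR\subseteq J$. Since $(b+J)\cdot i=bi+J=0$ for $b\in(J:I)$ and $i\in I$, the ideal $I$ annihilates, so the module is naturally a right $R/I$-module; by Chin--Quinn applied to $R$ with its polycyclic-by-finite grading, being graded-noetherian as a right $R$-module is equivalent to finite generation over $R$.

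For the direction $(2)\Rightarrow(1)$, my plan is to verify the hypothesis of Lemma~\ref{B/I} by showing $B/I$ is graded-noetherian as a right $R$-module. Given an ascending chain $M_1\subseteq M_2\subseteq\cdots$ of graded right $R$-submodules of $B/I$, let $\pi:B\twoheadrightarrow B/I$ and set $J_i:=\pi^{-1}(M_iB)$, a graded right ideal of $B$ containing $I$. By right noetherianity of $B$, the $J_i$ stabilise to some $J$, after which $M_iB=J/I$. Because $b\in J$ implies $bI\subseteq bB\subseteq J$, we obtain $J/I\subseteq(J:I)/I$, and hence the short exact sequence of right $R$-modules
\[
0\to J/I\to(J:I)/I\to(J:I)/J\to 0,
\]
whose rightmost term is graded-noetherian by (2). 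The remaining step is to handle the stabilisation of the $M_i$ within $J/I$; my plan is to use noetherian induction on $J$, applying hypothesis (2) to strictly larger ideals $J'\supsetneq J$ to pin down the behaviour of the $M_i$.

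For $(1)\Rightarrow(2)$, I would show $(J:I)/J$ is finitely generated as a right $R$-module. The $R$-linear map $R\to(J:I)/J$ sending $r\mapsto r+J$ has kernel the graded right ideal $R\cap J$ of $R$ and image $(R+J)/J\cong R/(R\cap J)$, which is cyclic and hence finitely generated. The main work is then controlling the quotient $(J:I)/(R+J)$, which is annihilated by $I$ (since $(J:I)I\subseteq J\subseteq R+J$) and hence a right $R/I$-module. Here my plan is to exploit that $(J:I)\cdot I$ is a graded right $B$-ideal contained in $J$ (closed under right multiplication by $B$ thanks to $IB\subseteq I$) and hence finitely generated as a right $B$-module by noetherianity of $B$; one then lifts a finite $B$-generating set of $(J:I)I$ to a finite set of elements of $(J:I)$ that span $(J:I)/(R+J)$ as a right $R$-module.

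The main obstacle in both directions is that $(J:I)$ is a right $R$-submodule but not a right $B$-submodule of $B$, so right noetherianity of $B$ cannot be invoked directly to produce generators. The idealiser condition $RI\subseteq I$ and the bridging role of $(J:I)I\subseteq J$ are what make finite generation of $(J:I)/(R+J)$ over $R$ tractable; this delicate interplay between the $B$-module and $R$-module structures is the heart of the argument and the step I expect to require the most care.
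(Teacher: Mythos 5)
Your reduction of $(2)\Rightarrow(1)$ to Lemma \ref{B/I} is the right frame, but the chain argument has a genuine gap. Setting $J_i=\pi^{-1}(M_iB)$ and stabilising to $J$ only tells you that all the $M_i$ eventually generate the same right ideal $J$ of $B$; it gives no control over the $M_i$ themselves. Moreover, in your exact sequence $0\to J/I\to (J:I)/I\to (J:I)/J\to 0$ the unknown piece $J/I$ sits as the \emph{sub}module, so noetherianity of the quotient $(J:I)/J$ says nothing about chains inside $J/I$; and the proposed ``noetherian induction on $J$, applying $(2)$ to strictly larger $J'$'' is not an argument, since once $M_iB=J/I$ for all large $i$ there is no larger ideal in play. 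The missing idea (the paper's, following Stafford and Rogalski) is to multiply on the \emph{right} by $I$: for a graded right $R$-submodule $K$ of $B$ with $R\subseteq K$, the set $KI$ is a graded right ideal of $B$ (because $IB\subseteq I$), hence $KI=\sum_{i=1}^{n}k_iI$ is finitely generated over $B$, contains $I$, and satisfies $K\subseteq (KI:I)$. Hypothesis $(2)$ applied to $J=KI$ makes $(KI:I)/KI$ a graded-noetherian $R$-module, so $K/KI$ is finitely generated and $K=\sum_{\ell}a_\ell R+\sum_i k_iI\subseteq \sum_\ell a_\ell R+\sum_i k_iR\subseteq K$ is finitely generated over $R$. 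This shows $B/R$ is graded-noetherian; combined with $R/I\cong\Hom_B(B/I,B/I)$ graded-noetherian (hypothesis $(2)$ with $J=I$) it gives $B/I$ graded-noetherian, and Lemma \ref{B/I} finishes.

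For $(1)\Rightarrow(2)$ your control of $(J:I)/(R+J)$ does not work: $(J:I)I$ is contained in $J$, so a finite $B$-generating set of $(J:I)I$ lives entirely in the part you have already quotiented out, and there is no well-defined lifting of such generators to $R$-module generators of $(J:I)/(R+J)$. Tellingly, you never use the hypothesis that $B$ is a domain, which is exactly what this direction requires. The paper's route is Lemma \ref{noeth}: since $R$ contains the nonzero right ideal $I$ of the domain $B$, a choice of $0\neq y\in I$ (homogeneous) gives $yB\subseteq IB\subseteq I\subseteq R$, so $B\cong yB$ is a finitely generated, hence noetherian, right $R$-module; then $(J:I)/J\cong\Hom_B(B/I,B/J)$ is a subquotient of $B_R$ and is therefore graded-noetherian. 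Your opening identification of the Hom-module with $(J:I)/J$ and the observation that $I$ annihilates it are both correct and agree with the paper.
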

\begin{proof}
This is a graded version of \cite[Proposition 2.1]{Rog} noting that the two-sided noetherien hypothesis on the overring $B$ in that result is superfluous.
Suppose $(2)$ holds.  By Lemma \ref{B/I} it suffices to show that $B/I$ is a graded-noetherian right $R$-module. By hypothesis, $R/I \cong \Hom_B(B/I,B/I)$ is right graded-noetherian and so it suffices to show $B/R$ is right graded-noetherian. To this end, let $K_R \leq_{\mathrm{gr}} B_R$ such that $R_R \leq K_R$. 
As $B$ is noetherian, let $J = KI = \sum_{i=1}^n k_i I$. 
As $R \leq K$ we have that $I \subseteq J$. Let $C = (J:I) = \{b \in B \mid bI \subseteq J\}\leq_{\gr}B_R$. Since $KI\subseteq J$, $K \subseteq C = C+J$.

We have the following identification: $\Hom _B(B/I,B/J) \cong (J:I)/J  = C/J$, thus by hypothesis $C/J$ is a right graded-noetherian $R$-module. 
Hence the submodule $K/J$ is finitely generated, say $K/J = \sum_{\ell=1}^m (a_{\ell}+J)R$ where $a_{\ell}\in K$ and so $K = \sum_{\ell=1}^m a_{\ell}R + \sum_{i=1}^n k_i I \subseteq \sum_{\ell=1}^m a_{\ell} R + \sum_{i=1}^n k_i R \subseteq K$ is finitely generated as required. Thus, as $R$ is $G$-graded, $R$ is right noetherian.

Conversely, observing that $R\subseteq B$ which contains a right ideal of $B$, namely $I$, we use Lemma \ref{noeth} to conclude $B_R$ is finitely generated and the rest of the argument may be found in  \cite[Proposition 2.1]{Rog}.
\end{proof}

\begin{proposition}(cf. \cite[Proposition 2.2]{Rog} \label{Toriff})
Let $G$ be a polycyclic-by-finite group and let $R$ be the idealiser of a graded right ideal $I$ in a left noetherian $G$-graded ring $B$ . Then the following are equivalent:
\begin{enumerate}
    \item[(1)] $R$ is left noetherian;
    \item[(2)] $\frac{BJ\cap R}{J}$ is a left graded-noetherian $R$-module (or $R/I$-module) for all finitely generated $J\leq_{\gr }$\({ }_R R\);
    \item[(3)] $R/I$ is left graded-noetherian and $\Tor_1^B(B/I,B/K) = (I\cap K)/IK$ is a graded-noetherian left $R$-module ($R/I$-module) for all $K\leq_{\gr}$\( _B B\).
\end{enumerate}
\end{proposition}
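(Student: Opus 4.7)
The plan is to adapt Rogalski's proof of \cite[Proposition 2.2]{Rog} to the $G$-graded setting. The key enabling observation is Chin-Quinn's theorem \cite{CQ}: since $G$ is polycyclic-by-finite, $R$ is left noetherian if and only if every graded left ideal of $R$ is finitely generated. All operations appearing in the argument (sums, intersections, kernels, cokernels, and Tor applied to graded modules) preserve the $G$-grading, so the arguments of \cite{Rog} transfer almost verbatim, with ``noetherian'' replaced everywhere by ``graded-noetherian''.

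For (1) $\Leftrightarrow$ (2), the implication (1) $\Rightarrow$ (2) is immediate, since $(BJ \cap R)/J$ is a graded left $R$-submodule of $R/J$ and thus graded-noetherian. Conversely, given a graded left ideal $J'$ of $R$, I would use left noetherianity of $B$ to pick finitely many homogeneous $j_1, \dots, j_n \in J'$ with $BJ' = \sum_i Bj_i$; setting $J = \sum_i Rj_i \subseteq J'$, hypothesis (2) makes $(BJ \cap R)/J$ graded-noetherian, so its submodule $J'/J$ is finitely generated and thus so is $J'$. Chin-Quinn then gives left noetherianity of $R$.

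For (1) $\Leftrightarrow$ (3), the Tor identification $(I \cap K)/IK = \Tor_1^B(B/I, B/K)$ follows by tensoring the sequence $0 \to I \to B \to B/I \to 0$ of right $B$-modules with the left $B$-module $B/K$ and computing the kernel of the induced map $I/IK \to B/K$. The direction (1) $\Rightarrow$ (3) is then easy: under (1), $R/I$ is graded-noetherian and $I$ is a finitely generated left $R$-module, so the subquotient $(I \cap K)/IK$ of the noetherian module $I/IK$ is graded-noetherian. For (3) $\Rightarrow$ (2), given a finitely generated graded left $R$-ideal $J$, I would use the graded short exact sequence
\[0 \to (I \cap BJ)/(I \cap J) \to (BJ \cap R)/J \to \bigl((BJ \cap R) + I\bigr)/(J + I) \to 0\]
to reduce to controlling its two ends. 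The right-hand term is a graded submodule of $R/(J+I)$, itself a quotient of $R/I$, so is graded-noetherian by (3). For the left-hand term, since $I$ is a right ideal of $B$ one has $IB = I$ and hence $IBJ = IJ$, so $(I \cap BJ)/IJ = \Tor_1^B(B/I, B/BJ)$ is graded-noetherian by (3) applied to $K = BJ$; its quotient $(I \cap BJ)/(I \cap J)$, obtained by factoring out $(I \cap J)/IJ$, is likewise graded-noetherian.

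The main technical obstacle is the bookkeeping around the displayed short exact sequence: verifying that every subquotient is genuinely $G$-graded (which follows because $J$, $I$, $B$, and $R$ are all graded and the maps are induced by inclusion), and that the identification $IBJ = IJ$ indeed brings the Tor term from (3) into play. Once this is set up correctly, the remainder is a direct transcription of the argument in \cite[Proposition 2.2]{Rog}, with Chin-Quinn invoked at the conclusion to pass from graded-noetherianity of $R$ to honest noetherianity.
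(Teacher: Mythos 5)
Your proposal is correct and follows essentially the same route as the paper: both reduce (1)$\Leftrightarrow$(2) to the standard argument of Rogalski (which the paper simply cites from \cite[Lemma 5.10]{R-generic}), obtain (1)$\Rightarrow$(3) from finite generation of $I$ as a left ideal of $R$, and prove (3)$\Rightarrow$(2) by sandwiching $(BJ\cap R)/J$ between $\Tor_1^B(B/I,B/BJ)=(I\cap BJ)/IBJ$ and a subquotient of $R/I$ via graded short exact sequences. The only cosmetic difference is that you use a single exact sequence where the paper uses two (first showing $(BJ\cap R)/IBJ$ is graded-noetherian, then passing to the quotient $(BJ\cap R)/J$), and your identification $IBJ=IJ$ plays exactly the role of the paper's isomorphism $\Tor_1^B(B/I,B/BJ)\cong (I\cap BJ)/IBJ$.
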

\begin{proof}
The equivalence of statements $(1)$ and $(2)$ may be found in \cite[Lemma 5.10]{R-generic}, and the argument $(1)$ implies $(3)$ may be found in \cite[Proposition 2.2]{Rog}. We note that these statements are not in the $G$-graded setting but the proofs follow in the same way.

We now show $(3)$ implies $(2)$. Let $J$ be an arbitrary finitely generated graded left ideal of $R$ and consider the following short exact sequences of left $R/I$-modules:

$$0 \to \frac{J}{IBJ} \to \frac{BJ \cap R}{IBJ}\to \frac{BJ\cap R}{J} \to 0$$

and

$$0 \to \frac{BJ\cap I}{IBJ} \to \frac{BJ \cap R}{IBJ}\to \frac{BJ\cap R}{BJ\cap I} \to 0.$$

Note $\Tor_1^B(B/I,B/BJ) \cong \frac{I\cap BJ}{IBJ}$ and $\frac{BJ\cap R}{BJ\cap I}$ injects into $R/I$. Thus, by assumption, the outer terms of the second short exact sequence are graded-noetherian left $R/I$-modules, so $\frac{BJ \cap R}{IBJ}$ is a graded-noetherian left $R/I$-module. Hence, from the first short exact sequence $\frac{BJ\cap R}{J}$ is a left graded-noetherian  $R/I$-module as required.
\end{proof}

\section{Idealisers in skew group rings}
We have shown that the noetherianity of idealisers in left and right noetherian domains graded by polycyclic-by-finite groups is completely determined by the properties of $\Hom$ and $\Tor$ spaces associated to homogeneous ideals. Let us now turn to the situation of interest to us, when $B$ is the skew group ring of a polycyclic-by-finite group, and prove some further useful results. We begin by fixing notation.
\begin{Definition}
Given a ring $R$ and a group $G$ which acts on $R$, for $g\in G$, $r\in R$ the image of $r$ under action by $g$ will be denoted $r^g$. We denote by $R\mypound G$ the skew group ring which is a free left $R$-module with elements of $G$ as a basis and with multiplication determined by
$$(rh)(sg) = (rs^h)(hg)$$
for $g,h \in G$ and $r,s \in R$. Each element of $R\mypound G$ can be written uniquely as $\sum_{g\in G}r_g g$ with $r_g=0$ for all but finitely many $g \in G$.
\end{Definition}

Many results in \cite{Sierra} are proved under the assumption that the subscheme $Z$ at which one idealises has infinite order under $\sigma$, that is $\Stab_{\sigma}(Z)$ is trivial. We wish to allow nontrivial stabilisers, which will require some notation, which will be in force for the remainder of the paper.

From here on $\kk$ will denote an algebraically closed field.
\begin{notation}\label{Not}We let $I$ be a prime ideal in a commutative noetherian domain $C$ which is a $\kk$-algebra. Let $G$ be a polycyclic-by-finite group which acts on $C$ and let $B \coloneqq C\mypound G$ be the skew group ring which is left and right noetherian by Theorem \ref{MR} . Let $R=\mathbb{I}_B(IB)$ denote the idealiser in $B$ of the right ideal $IB$. Since $IB$ is graded, so is $R$.
\end{notation}
We have the following result about the noetherianity of skew group rings from McConnell and Robson.
\begin{theorem}\label{MR}\cite[1.5.12]{MR}
Let $G$ be a polycyclic-by-finite group and $R$ a ring. If $R$ is right (left) noetherian then $R\mypound G$ is right (left) noetherian.
\end{theorem}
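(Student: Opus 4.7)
The plan is to exploit the defining structural property of polycyclic-by-finite groups, reduce to two elementary extension steps, and invoke a Hilbert-basis-theorem argument for each. Recall that $G$ being polycyclic-by-finite is equivalent to the existence of a subnormal series
$$1 = G_0 \lhd G_1 \lhd \cdots \lhd G_n = G$$
in which each quotient $G_{i+1}/G_i$ is either finite or infinite cyclic. I would proceed by induction on the length $n$ of such a series; the base case $n=0$ is trivial since then $R \mypound G = R$. For the inductive step, set $H := G_{n-1}$, and assume by the inductive hypothesis that $R \mypound H$ is right noetherian (under the restricted action). The work is then to propagate noetherianity across one extension $H \lhd G$ with $G/H$ either finite or isomorphic to $\ZZ$.

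In the finite case, choose coset representatives $g_1, \dots, g_m$ of $H$ in $G$. Every element of $G$ factors uniquely as $h g_i$ with $h \in H$, so
$$R \mypound G \;=\; \bigoplus_{i=1}^m (R \mypound H)\, g_i$$
is a free left $R \mypound H$-module of finite rank $m$, and a symmetric decomposition works on the right. Any right ideal of $R \mypound G$ is in particular a right $R \mypound H$-submodule of this finitely generated module over a noetherian ring, hence is finitely generated; thus $R \mypound G$ is right noetherian. In the infinite cyclic case, since $\ZZ$ is free the short exact sequence $1 \to H \to G \to \ZZ \to 1$ splits, giving $G \cong H \rtimes \ZZ$. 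Fix a lift $t \in G$ of a generator of the $\ZZ$-factor, and let $\sigma$ be the automorphism of $R \mypound H$ obtained by combining conjugation by $t$ on $H$ with the given action of $t$ on $R$. Direct verification on the basis identifies
$$R \mypound G \;\cong\; (R \mypound H)[\,t^{\pm 1};\,\sigma\,],$$
the skew Laurent extension. Noetherianity of $R \mypound G$ then follows from the Hilbert basis theorem for skew Laurent rings over a right noetherian base.

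The main obstacle is this last step, namely proving that $(R \mypound H)[\,t^{\pm 1};\,\sigma\,]$ inherits right noetherianity from $R \mypound H$. I would first clear denominators to reduce a right ideal $J$ of the Laurent ring to a right ideal of the skew polynomial subring $(R \mypound H)[t;\sigma]$. Then I would run the standard leading-coefficient argument: associate to $J$ the chain of right ideals $J_w \leq R \mypound H$ of leading coefficients of elements of $J$ of $t$-degree at most $w$. These form an ascending chain which stabilises by the inductive hypothesis, and a standard division-with-remainder argument in $t$ then extracts a finite generating set for $J$. This is morally the same filtration-and-homogenisation argument used earlier in the proof that graded ideals control noetherianity for rings graded by finitely generated abelian groups, and in fact the treatment there can be directly adapted. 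The left noetherian statement is entirely symmetric, using $R \mypound G = \bigoplus_i g_i (R \mypound H)$ in the finite case and the opposite-side Hilbert basis argument in the cyclic case.
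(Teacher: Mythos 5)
Your argument is correct, but note that the paper offers no proof of this statement at all: it is quoted verbatim from McConnell--Robson \cite[1.5.12]{MR}. Your induction along a subnormal series with finite or infinite cyclic factors, handling the finite-index step via the finitely generated (right) module decomposition $\bigoplus_i g_i(R\mypound H)$ and the infinite cyclic step via the identification $R\mypound G\cong (R\mypound H)[t^{\pm 1};\sigma]$ together with the skew Laurent Hilbert basis theorem, is precisely the standard proof given in that reference, so you have in effect reproduced the cited argument rather than found a new one.
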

As noted earlier, it is still an open question as to whether $R\mypound G$ being noetherian implies that $G$ is a polycyclic-by-finite group. Thus we not consider groups more general than polycyclic-by-finite as we do not know whether $C\mypound G$ will be noetherian, and so all of the results from Section $2$ no longer hold.
Now that we have specified the rings with which we are working, we can be more precise about the structure of $\mathbb{I}_B(IB)$.
\begin{lemma}
\label{idealiser}
Assume Notation \ref{Not} and recall the notation $(J:I)$ from Definition \ref{quot}. Let $J\lhd C$ and consider $JB\leq B_B$. Then $$(JB:IB) = \bigoplus_{g \in G}(J:I^g)g.$$ Further $$R  = \bigoplus_{g\in G}(I:I^{g}) g.$$
\end{lemma}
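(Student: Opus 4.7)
The plan is to prove the two identities in sequence, and the second will follow from the first by the tautological observation that $R = \mathbb{I}_B(IB) = (IB:IB)$ as a subset of $B$, so it suffices to set $J = I$ in the first identity to deduce the formula for $R$. So the real content is the computation of $(JB:IB)$.

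For the first identity, I would start by noting that the right ideals $IB$ and $JB$ of $B$ are both $G$-graded, which by inspection of the definition forces $(JB:IB)$ to be $G$-graded as well; so it is enough to identify the homogeneous component of degree $g$. I would then reduce the membership test to something manageable: since $I\subseteq C\subseteq B$ sits in the identity component and generates $IB$ as a right $B$-module, an element $b\in B$ lies in $(JB:IB)$ if and only if $bI\subseteq JB$. Writing $b=\sum_{g\in G}b_g g$ with $b_g\in C$, the skew multiplication rule $(rh)(sg)=(rs^h)(hg)$ in Definition makes the computation explicit:
\[
bI \;=\; \sum_{g\in G} b_g\, g\cdot I \;=\; \sum_{g\in G}\bigl(b_g I^g\bigr)\, g.
\]
Since $JB=\bigoplus_{g\in G}Jg$ as a left $C$-module (using that $B$ is a free left $C$-module on $G$ and that $J$ is an ideal of $C$), the condition $bI\subseteq JB$ is equivalent to $b_g I^g\subseteq J$ for every $g\in G$, i.e.\ $b_g\in (J:I^g)$ for each $g$. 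This yields the decomposition $(JB:IB)=\bigoplus_{g\in G}(J:I^g)\, g$.

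The formula for $R$ then follows immediately by taking $J=I$.

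There is no real obstacle here: the result is a direct unpacking of the definitions once one observes that (i) $IB$ is generated as a right ideal by $I$, which sits in the identity graded piece, and (ii) $JB$ admits the direct-sum decomposition $\bigoplus_g Jg$. The only point that requires a moment's care is keeping the multiplication $gc=c^g g$ in the right order when moving the group element past the coefficient, which is why the twist $I^g$ appears rather than $I$ itself.
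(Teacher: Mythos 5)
Your proposal is correct and follows essentially the same route as the paper: both observe that $(JB:IB)$ is $G$-graded and then compute the degree-$g$ component directly from the skew multiplication rule $gc = c^g g$, reducing to the condition $b_g I^g \subseteq J$, with the formula for $R$ obtained as the special case $J=I$ via $R=(IB:IB)$. The only cosmetic difference is that you work with a general element $b=\sum_g b_g g$ and let the conditions decouple, whereas the paper computes each homogeneous piece $(JB:IB)_g$ separately; the content is identical.
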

\begin{proof}
We note that as both $JB$ and $IB$ are graded, then $(JB:IB)$ is also graded. Then for $g \in G$ we have the following identifications:
\begin{align*}
(JB:IB)_g &= \{c \in C \mid cgIB\subseteq JB\}g\\
          &= \{c \in C \mid cI^gB \subseteq JB\}g\\
          &= \{c\in C \mid cI^g \subseteq J\}g = (J:I^g)g.
\end{align*}
Observing $R= \{b\in B\mid bIB\subseteq IB\} = (IB:IB)$ gives the result.

\end{proof}
We note that if $I$ has trivial stabiliser under the action by $G$, then $R= C+IB$ by the primeness of $I$. 
We also have the following reductions of Propositions \ref{right-hom} and \ref{Toriff} using that $B$ is strongly graded.
\begin{theorem}\label{tor}
Assume Notation \ref{Not}. Then the following are equivalent:
\begin{enumerate}
    \item [(1)]$\bigoplus_{g \in G}\Tor_1^C(C/I,C/P^g)g\}$ is a finitely generated left $R$-module for all prime ideals $P \lhd C$;
    \item[(2)] $R$ is left noetherian.
\end{enumerate}
\end{theorem}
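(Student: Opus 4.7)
The approach is to use Proposition \ref{Toriff}, translating between $\Tor_1^B$ over the skew group ring and $\Tor_1^C$ over $C$, and then reducing from arbitrary ideals to prime ideals via noetherian induction.

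First I would record the structure of graded left ideals in $B = C\mypound G$. Since $B$ is strongly $G$-graded, every graded left ideal has the shape $BJ$ for some ideal $J\lhd C$, with the explicit decomposition $BJ = \bigoplus_{g\in G} J^g\cdot g$ obtained from the multiplication rule $(1\cdot g)(j\cdot 1) = j^g\cdot g$. Combining this with $IB = \bigoplus_g I\cdot g$ yields $IBJ = \bigoplus_g IJ^g\cdot g$ and $IB\cap BJ = \bigoplus_g(I\cap J^g)\cdot g$. Since Proposition \ref{Toriff} identifies $\Tor_1^B(B/IB,B/BJ)$ with $(IB\cap BJ)/IBJ$, this gives the key identity
\[
\Tor_1^B(B/IB,B/BJ) \;\cong\; \bigoplus_{g\in G}\Tor_1^C(C/I,C/J^g)\cdot g
\]
as graded left $R$-modules. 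With this identity, the direction $(2)\Rightarrow(1)$ is immediate: if $R$ is left noetherian, then Proposition \ref{Toriff}(3) applied to $K=BP$ shows the right-hand side is graded-noetherian, hence finitely generated as a left $R$-module.

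For $(1)\Rightarrow(2)$, I would verify the hypotheses of Proposition \ref{Toriff}(3). Given an arbitrary graded left ideal of $B$, realise it as $BJ$ for some $J\lhd C$. Noetherianity of $C$ furnishes a finite filtration $0=M_n\subset M_{n-1}\subset\cdots\subset M_0=C/J$ of $C$-modules with each successive quotient $M_i/M_{i+1}\cong C/P_i$ for a prime $P_i\supseteq J$. Because $B=\bigoplus_g Cg$ is free, and hence flat, as a right $C$-module under the twisted right action, applying $B\otimes_C(-)$ produces a filtration of $B/BJ$ by left $B$-submodules whose quotients are the $B/BP_i$. The long exact sequence for $\Tor_\bullet^B(B/IB,-)$ then presents $\Tor_1^B(B/IB,B/BJ)$ as an iterated extension of subquotients of the modules $\Tor_1^B(B/IB,B/BP_i)\cong \bigoplus_g\Tor_1^C(C/I,C/P_i^g)\cdot g$, which are finitely generated as left $R$-modules by hypothesis. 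The remaining condition that $R/IB$ be graded-noetherian is handled in parallel, since by Lemma \ref{idealiser} we have $R/IB\cong\bigoplus_g(I:I^g)/I\cdot g$, whose graded pieces are controlled by the same family of $\Tor_1^C$ modules.

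The main obstacle is the gap between the ``finitely generated'' hypothesis in (1) and the ``graded-noetherian'' conclusion required to invoke Proposition \ref{Toriff}(3); extensions and subquotients of finitely generated modules are only automatically finitely generated once $R$ itself is noetherian, which is what we are trying to prove. I would close this gap by a bootstrap argument: any graded left $R$-submodule of $\bigoplus_g\Tor_1^C(C/I,C/P^g)\cdot g$ is controlled by its graded pieces, each a $C$-submodule of the noetherian $C$-module $\Tor_1^C(C/I,C/P^g)=(I\cap P^g)/IP^g$, and the $R$-action forces such a submodule to itself arise as $\bigoplus_g\Tor_1^C(C/I,C/J^g)\cdot g$ for some larger ideal $J\supseteq I$. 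The noetherian-induction machine above then applies recursively to give finite generation of each graded submodule, upgrading the prime-ideal hypothesis to graded-noetherianness for every ideal $J$ and so verifying Proposition \ref{Toriff}(3).
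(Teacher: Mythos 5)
Your main line of argument --- reducing to graded left ideals of the form $BJ$ via strong gradedness, identifying $\Tor_1^B(B/IB,B/BJ)$ with $\bigoplus_{g}\Tor_1^C(C/I,C/J^g)g$, and passing from arbitrary $J$ to primes by a prime filtration, flatness of $B$ over $C$, and the long exact sequence of Tor --- is exactly the paper's proof. The one place you diverge is the closing ``bootstrap'', and that step does not work as stated. The claim that a graded left $R$-submodule of $\bigoplus_{g}\Tor_1^C(C/I,C/P^g)g$ must itself be of the form $\bigoplus_{g}\Tor_1^C(C/I,C/J^g)g$ for a single larger ideal $J$ is false in general: since $I(I\cap P^g)\subseteq IP^g$, the right ideal $IB$ annihilates these modules on the left, so (say, in the trivial-stabiliser case, where $R=C+IB$) a graded submodule is nothing more than an arbitrary family of $C/I$-submodules $N_g\subseteq (I\cap P^g)/IP^g$, one for each $g$, with no constraint tying the family to any ideal $J$ of $C$.

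The gap you correctly identified --- that Proposition \ref{Toriff}(3) asks for graded-noetherian while (1) only supplies finite generation, and that the induction needs submodules of finitely generated modules to be finitely generated --- is closed far more simply, and this is what the paper uses implicitly. Every module in sight is annihilated by $IB$, hence is a module over $R/IB\cong (C/I)\mypound K$ where $K=\Stab_G(I)$ (Lemma \ref{idealiser}, Proposition \ref{RIB}); this ring is left and right noetherian by Theorem \ref{MR}, since $K$ is again polycyclic-by-finite and $C/I$ is noetherian. Over a noetherian ring, finitely generated and noetherian coincide for modules, so hypothesis (1) does verify Proposition \ref{Toriff}(3), and the step in your induction where $\im\beta$ is a submodule of the finitely generated module $\Tor_1^B(B/IB,B/BP_i)$ is licensed by the same observation. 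With the bootstrap replaced by this remark, your argument is the paper's argument.
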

\begin{proof}
By Proposition \ref{Toriff}, $R$ is left noetherian if and only if $\Tor_1^B(B/IB,B/K) = (IB\cap K)/IBK$ is a finitely generated left $R$-module for all graded left ideals $K\leq B$. Every graded left ideal of $B$ is of the form $K = BJ$ where $J\lhd C$ as $B$ is strongly graded. Hence $R$ is left noetherian if and only if $\Tor_1^B(B/IB,B/BJ)$ is a finitely generated left $R$-module for all ideals $J\lhd C$.

We now prove that the following statements are equivalent:
\begin{enumerate}
    \item[(a)] $\Tor_1^B(B/IB,B/BJ)$ is a finitely generated left $R$-module for all ideals $J\lhd C$;
    \item[(b)] $\Tor_1^B(B/IB,B/BP)$ is a finitely generated left $R$-module for all prime ideals $P\lhd C$;
\end{enumerate}
That $(\mathrm{a})$ implies $(\mathrm{b})$ is clear. 

We have the following result from commutative algebra \cite[Proposition 3.7]{Eis}:

There exist $C$-modules $M_i$ and prime ideals $P_i\lhd C$ for $i=0,\dots, n$ such that
$$0=M_0\subset M_1\subset \dots \subset M_{n-1}\subset M_n = C/J$$
and $M_{j+1}/M_j \cong C/P_{j+1}$ for $j=0,\dots, n-1$.

Assume $(\mathrm{b})$. We show by induction on $j$ that $\Tor_1^B(B/IB,B\otimes_C M_j)$ is a finitely generated left $R$-module. The statement is trivially true for $j=0$ as then $B\otimes_C M_0\cong 0$.  Let us now consider the short exact sequence
\begin{align*}
\begin{array}{ccccccccc}
  0 &\longrightarrow & M_{j-1} & \longrightarrow & M_j & \longrightarrow & C/P_{j} &  \longrightarrow & 0.
\end{array}
\end{align*}
Applying $B\tensor_C -$ which is exact as $B_C$ is flat, we obtain
\begin{align*}
\begin{array}{ccccccccc}
  0 &\longrightarrow & B \otimes_C M_{j-1} & \longrightarrow & B\otimes_C M_j & \longrightarrow & B/BP_{j} &  \longrightarrow & 0.
\end{array}
\end{align*}
Applying $B/IB \otimes_B -$ gives rise to a long exact sequence which contains the following terms
\[\dots \to\Tor_1^B(B/IB, B \otimes_C M_{j-1}) \xrightarrow{\alpha}  \Tor_1^B(B/IB, B \otimes_C M_{j})  \xrightarrow{\beta}  \Tor_1^B(B/IB, B/BP_{j})   \to  \dots,\]
from which we extract the short exact sequence
\[0\to \ker \beta \to \Tor_1^B(B/IB, B \otimes_C M_{j}) \xrightarrow{\beta} \im\beta \to 0.\]
Since $\ker \beta \cong \im \alpha$ is a homomorphic image of $\Tor_1^B(B/IB, B \otimes_C M_{j-1})$, which is finitely generated by induction, and $\im \beta$ is a submodule of $\Tor_1^B(B/IB, B/BP_{j})$, which is finitely generated by assumption, we conclude $\Tor_1^B(B/IB, B \otimes_C M_{j})$ is a finitely generated left $R$-module as required. Hence $(\mathrm{a})$ holds.

Now that we have shown the equivalence of these statements we have the following identifications of left $R$-modules:
\begin{align*}
    \Tor_1^B(B/IB,B/BP) &= (IB\cap BP)/IPJ = \bigoplus_{g \in G}\frac{I\cap P^g}{IP^g} g\\
    &=\bigoplus_{g \in G}\Tor_1^C(C/I,C/P^g)g
\end{align*}
which completes the proof.
\end{proof}

We also have a similar result for right noetherianity:
\begin{theorem}\label{STPrime}
Assume Notation \ref{Not}. Then the following are equivalent:
\begin{enumerate}
    \item [(1)]$\bigoplus_{g \in G}\frac{(P:I^g)}{P}g$ is a finitely generated right $R$-module for all prime ideals $P\lhd C$ which contain $I$;
    \item[(2)] $R$ is right noetherian.
\end{enumerate}
\end{theorem}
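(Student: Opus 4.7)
The approach parallels the proof of Theorem \ref{tor}. The plan is to use Proposition \ref{right-hom} to reformulate (2), reduce graded right ideals of $B$ to ideals of $C$ via the strongly graded structure of $B = C\mypound G$, and then reduce arbitrary $J \lhd C$ containing $I$ to primes via a commutative-algebra filtration.

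First I would invoke Proposition \ref{right-hom}: (2) holds if and only if $\Hom_B(B/IB, B/K)$ is graded-noetherian as a right $R$-module for every graded right ideal $K \supseteq IB$ of $B$. Since $B$ is strongly $G$-graded, a direct examination of the right actions of $C$ and of $G$ on the graded components shows that every graded right ideal of $B$ has the form $JB$ for a unique $J \lhd C$, with $K \supseteq IB$ corresponding to $J \supseteq I$. Lemma \ref{idealiser} then identifies
$$\Hom_B(B/IB, B/JB) \cong (JB:IB)/JB = \bigoplus_{g \in G}(J:I^g)/J \cdot g,$$
and specialising to prime $J$ gives (2)$\implies$(1).

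For the converse, given $J \supseteq I$, I would choose a prime filtration $0 = M_0 \subset \cdots \subset M_n = C/J$ with $M_j/M_{j-1} \cong C/P_j$ for primes $P_j$, each of which contains $J$ hence $I$. Since $B$ is free (and so flat) as a left $C$-module, tensoring yields a filtration of right $B$-modules $0 = N_0 \subset \cdots \subset N_n = B/JB$ with $N_j/N_{j-1} \cong B/P_jB$. Applying the left-exact functor $\Hom_B(B/IB, -)$ to each $0 \to N_{j-1} \to N_j \to B/P_jB \to 0$ produces
$$0 \to \Hom_B(B/IB, N_{j-1}) \to \Hom_B(B/IB, N_j) \to \Hom_B(B/IB, B/P_jB),$$
and an induction on $j$ would then close, provided each of these terms is graded-noetherian over $R$.

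The main obstacle is exactly this last point: by (1), the rightmost term is only assumed to be finitely generated, whereas the induction needs its submodule appearing as the image of the connecting map to be finitely generated as well. To upgrade finite generation to graded-noetherianity I would observe that all the $\Hom$ modules appearing are annihilated by $IB$ and hence are right modules over the quotient ring $R/IB$. This ring is itself noetherian: writing $H = \Stab_G(I)$, primeness of $I$ combined with the fact that in the noetherian domain $C$ there is no infinite descending chain of primes below $I$ (by Krull's height theorem, applied to $I \supsetneq I^g \supsetneq I^{g^2} \supsetneq \cdots$) forces $(I:I^g)$ to equal $C$ when $g \in H$ and $I$ otherwise, whence $R/IB \cong (C/I)\mypound H$, which is noetherian by Theorem \ref{MR}. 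With $R/IB$ noetherian, finite generation and noetherianity of $R/IB$-modules coincide, so submodules of finitely generated $\Hom$-modules are finitely generated, and the induction closes to yield (2).
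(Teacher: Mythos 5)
Your argument is correct and is exactly the proof the paper has in mind: the paper omits it, remarking only that it follows from Proposition \ref{right-hom} in the same style as Theorem \ref{tor} follows from Proposition \ref{Toriff}, and that is precisely the reduction (strongly graded structure giving $K = JB$, prime filtration of $C/J$, induction along the filtration) you carry out. You also correctly pinpoint and resolve the one subtlety --- that submodules of the finitely generated $\Hom$ modules must again be finitely generated --- via the noetherianity of $R/IB \cong (C/I)\mypound K$, which the paper establishes separately in Proposition \ref{RIB}.
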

We omit the proof from Proposition \ref{right-hom} as this follows in exactly the same style as Theorem  \ref{tor} follows from Propositin \ref{Toriff}.
\section{Idealisers in skew group rings of abelian groups}
\subsection{Right noetherianity}
Let us first consider right noetherianity for the setup of Notation \ref{Not}. By Theorem  \ref{STPrime}, we must show $\Hom_B(B/IB,B/JB)$ is a finitely generated right $R/IB$-module for all prime ideals $J\lhd C$ which contain $I$. As mentioned before, since we are dealing with non-trivial stabilisers, we require some extra notation and a definition.
\begin{Definition}
For two subgroups $H,K$ of a finitely generated abelian group $G$, we say $H$ is {\em complementary} to $K$ if $H\cap K = \{0\}$ and $H\oplus K$ is of finite index in $G$.
\end{Definition}
\begin{remark}
We note that by the classification theorem for finitely generated abelian groups, that a complement always exists and may be chosen to be a free abelian group with $\rank(K) = \rank(G)-\rank(H)$.
\end{remark}
From now on we denote $K \coloneqq \Stab_G(I)$.
\begin{proposition}\label{RIB}
Assume Notation \ref{Not}. Then $R/IB \cong (C/I)\mypound K$ is left and right noetherian.
\end{proposition}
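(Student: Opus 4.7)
The plan is to unpack the formula for $R$ given by Lemma \ref{idealiser} and check which summands survive modulo $IB$.

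First I would compute the colon ideals $(I:I^g)$ for each $g \in G$. When $g \in K$, $I^g = I$, so trivially $(I:I^g) = C$. The real work is to show that when $g \notin K$, one has $(I:I^g) = I$. Since $I$ is prime, it suffices to show $I^g \not\subseteq I$: for then, picking any $x \in I^g\setminus I$, an element $c \in (I:I^g)$ satisfies $cx \in I$, forcing $c \in I$ by primeness, and the reverse inclusion $I \subseteq (I:I^g)$ is obvious. To rule out $I^g \subseteq I$, I would argue by contradiction: applying $g^{-1}$ iteratively yields an ascending chain of prime ideals $I \subseteq I^{g^{-1}} \subseteq I^{g^{-2}} \subseteq \cdots$ which, since $C$ is noetherian, must stabilise, giving $I = I^{g^{-1}}$ and hence $g \in K$.

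Given these colon computations, Lemma \ref{idealiser} yields
\[
R \;=\; \bigoplus_{g \in K} Cg \;\oplus\; \bigoplus_{g \notin K} Ig,
\qquad
IB \;=\; \bigoplus_{g \in G} Ig,
\]
so $IB \subseteq R$ and taking the quotient term-by-term gives
\[
R/IB \;\cong\; \bigoplus_{g \in K}(C/I)g.
\]
I would then verify that the multiplication induced from $B = C\mypound G$ on the right-hand side matches that of the skew group ring $(C/I)\mypound K$: this uses precisely the fact that $K$ stabilises $I$, so the $K$-action on $C$ descends to a well-defined action on $C/I$, and the grading on $R/IB$ is by $K \leq G$ with the expected twisted multiplication.

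Finally, to conclude noetherianity, I would observe that $C/I$ is a commutative noetherian domain (quotient of $C$ by a prime) and $K$, being a subgroup of the finitely generated abelian group $G$, is itself finitely generated abelian and in particular polycyclic-by-finite. Theorem \ref{MR} then immediately gives that $(C/I)\mypound K$ is both left and right noetherian. The only genuinely non-routine step is the primeness-plus-noetherianity argument showing $I^g \not\subseteq I$ for $g \notin K$; everything else is bookkeeping with the decomposition from Lemma \ref{idealiser}.
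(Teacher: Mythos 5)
Your proof is correct and follows essentially the same route as the paper: compute $(I:I^g)$ via primeness, read off $R/IB\cong\bigoplus_{k\in K}(C/I)k$ from Lemma \ref{idealiser}, and conclude by Theorem \ref{MR} since $K$ is finitely generated abelian. The one detail you add that the paper leaves implicit is the noetherian ascending-chain argument showing $I^g\subseteq I$ forces $g\in\Stab_G(I)$, which is a worthwhile (and correct) justification for identifying the index set with $K$.
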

\begin{proof}
We note that for any prime ideal $J\lhd C$,
\begin{equation}
(J:I^{g}) = 
\begin{cases}
              C \textrm{ if } I^{g}\subseteq J, \\
             J \textrm{ else.}
              
\end{cases}\tag{1}
\end{equation}
Then by Lemma \ref{idealiser} , $R = \bigoplus_{g\in G}(I:I^g)g = \left(\bigoplus_{g\in K}Cg\right)\oplus \left(\bigoplus_{g\in G\setminus{K}}Ig\right)$. Hence
$$R/IB = \bigoplus_{g\in G}\frac{(I:I^g)}{I}g = \bigoplus_{k\in K}\frac{C}{I}k.$$
This is a free left $(C/I)$-module which has a basis generated by the elements of $K$. Thus $R/IB = (C/I)\mypound K$.

As $K$ is a subgroup of a finitely generated abelian group it is also finitely generated abelian and $C/I$ is clearly noetherian, $(C/I)\mypound K$ is left and right noetherian by Theorem \ref{MR}.
\end{proof}
\begin{notation}
For a subgroup $H\leq G$ and an ideal $J\lhd C$ we denote specific sets as follows: $$S_{H,V(J)}\coloneqq \{h\in H \mid h.V(J) \subseteq V(I)\},$$
where $h.p$ is the induced action of $G$ on $\spec C$, and
$$T_{H,V(J)} \coloneqq \{h \in H \mid \Tor_1^C(C/I^{-h}, C/J)\neq 0\}.$$
\end{notation}
\begin{lemma}\label{sets}
Assume Notation \ref{Not}. The sets $S_{G,V(J)}$ and $T_{G,V(J)}$ are both $K$-sets.
\end{lemma}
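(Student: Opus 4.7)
The plan is to interpret the claim as asserting that both $S_{G,V(J)}$ and $T_{G,V(J)}$ are closed under translation by $K = \Stab_G(I)$, and to verify each such closure directly from the definition, exploiting that $K$ fixes $I$ and hence also $V(I)$ setwise.

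First I would treat $S_{G,V(J)}$. Starting from $g \in S_{G,V(J)}$, meaning $g.V(J) \subseteq V(I)$, I would apply any $k \in K$ and use that $G$ acts as a group on $\spec C$, so that $(k+g).V(J) = k.(g.V(J))$, to conclude $(k+g).V(J) \subseteq k.V(I) = V(I)$; the last equality holds because $I^k = I$ forces $V(I)$ to be setwise fixed under the $k$-action on $\spec C$. Hence $k+g \in S_{G,V(J)}$, which is exactly the required $K$-closure.

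For $T_{G,V(J)}$, I would use the convention $r^{ab} = (r^b)^a$ extracted from the multiplication rule $(rh)(sg) = (rs^h)(hg)$ in $B = C\mypound G$, together with $G$ being abelian, to compute
\[
I^{-(k+g)} = (I^{-k})^{-g} = I^{-g},
\]
where the last equality uses $-k \in K$ and therefore $I^{-k} = I$. This yields $\Tor_1^C(C/I^{-(k+g)}, C/J) \cong \Tor_1^C(C/I^{-g}, C/J)$, so the vanishing/nonvanishing condition defining $T_{G,V(J)}$ only depends on the $K$-coset of $g$; in particular $T_{G,V(J)}$ is $K$-invariant. There is essentially no genuine obstacle here: the lemma is really just an unpacking of the definition of $K$ as the stabiliser of $I$, and the only thing to keep straight is the convention for how $G$ acts on ideals and on $\spec C$ (so that the left-right bookkeeping of the exponents is correct).
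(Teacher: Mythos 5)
Your proof is correct and takes essentially the same approach as the paper: both parts reduce to the fact that $K=\Stab_G(I)$ fixes $I$ (hence $V(I)$) and to the bookkeeping of the $G$-action. Your treatment of $T_{G,V(J)}$ is in fact marginally more direct — you observe $I^{-(k+g)}=I^{-g}$ outright, whereas the paper reaches the same conclusion via a chain of equivalences transferring the action onto $J$ and back — but the underlying idea is identical.
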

\begin{proof}
We begin with $S_{G,V(J)}$. We must show that if $a \in S_{G,V(J)}$, then $a+K \in S_{G,V(J)}$. Indeed
$$(a+K).V(J) \subseteq K.V(I) \subseteq V(I).$$
Now for $T_{G,V(J)}$ we have 
\begin{align*}\Tor_1^C(C/I^{-a},C/J) = 0 &\iff \Tor_1^C(C/I,C/J^a) = 0 \\
&\iff  \Tor_1^C(C/I^{-k},C/J^a)=0 \quad \forall k \in K \textrm{ as } K = \Stab_{G}(I) \\
&\iff \Tor_1^C(C/I,C/J^{(a+k)})=0  \\
&\iff \Tor_1^C(C/I^{-(a+k)},C/J)=0 \quad \forall k \in K.
\end{align*}
\end{proof}
\begin{theorem}\label{R}
Assume Notation \ref{Not}. Then the following are equivalent:
\begin{enumerate}
    \item[(1)] there exists a subgroup $H\leq G$, complementary to $K$, such that for all points $p \in V(I)$ the set
    $S_{H, p}$
    is finite;
    \item[(2)] $\mathbb{I}_B(IB)$ is right noetherian;
    \item[(3)] for all subgroups $H \leq G$, complementary to $K$, and for all points $p \in V(I)$, $S_{H,p}$ is finite.
\end{enumerate}
\end{theorem}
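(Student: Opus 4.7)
The implication $(3) \Rightarrow (1)$ is immediate, since a complement $H$ of $K$ always exists by the remark following the definition of ``complementary''. For $(1) \Rightarrow (2)$, I plan to invoke Theorem \ref{STPrime}: it suffices to show the right $R$-module $M_P \coloneqq \bigoplus_{g \in G}(P:I^g)/P \cdot g$ is finitely generated for every prime $P \lhd C$ with $P \supseteq I$. Since $P$ and each $I^g$ are prime, primality of $P$ gives $(P:I^g) = C$ when $I^g \subseteq P$ and $(P:I^g) = P$ otherwise, so $M_P = \bigoplus_{g \in T_P}(C/P)\cdot g$ where $T_P \coloneqq \{g \in G \mid I^g \subseteq P\}$. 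Using Lemma \ref{idealiser} and the multiplication in $B$, for $g \in T_P$ and $r_h \in (I:I^h)$ I compute $(1\cdot g)(r_h\,h) = r_h^g\cdot gh$. When $h \notin K$, $r_h \in I$ forces $r_h^g \in I^g \subseteq P$, vanishing in $M_P$; when $h \in K$ and $c \in C \subseteq R$, the products $(1\cdot g)(c\,h) = c^g\cdot gh$ sweep out all of $(C/P)\cdot gK$ as $c$ and $h$ vary. Hence $M_P$ is finitely generated if and only if $T_P/K$ is a finite subset of $G/K$.

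To deduce $T_P/K$ finite from (1), set $G_0 \coloneqq H\oplus K$, a finite-index subgroup of $G$, and pick representatives $g_1,\dots,g_N$ for $G/G_0$. A direct calculation using $I^k = I$ for $k \in K$ shows $g_i + h + k \in T_P$ iff $-h.V_i \subseteq V(I)$, where $V_i \coloneqq -g_i.V(P)$. The key observation is the following translation trick: for any point $q \in \spec C$ the set $S_{H,q}$ is either empty or, picking any $h_0 \in H$ with $h_0.q \in V(I)$, equals the translate $h_0 + S_{H,\,h_0.q}$, the latter of which is finite by (1); hence $S_{H,q}$ is finite for every $q \in \spec C$, not merely for $q \in V(I)$. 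Applying this to any $p_0 \in V_i$ gives $S_{H,V_i} \subseteq S_{H,p_0}$ finite for each $i$, so $T_P/K$ is a finite union of finite sets, which proves (2).

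For $(2) \Rightarrow (3)$, given an arbitrary complement $H$ and any $p \in V(I)$, I take $P = \mathfrak{m}_p$ and apply the analysis above to conclude that $T_P/K$ is finite. Since $H \cap K = \{0\}$, the natural map $H\cap T_P \hookrightarrow T_P/K$ is injective, and unwinding gives $H \cap T_P = \{h \in H \mid I^h \subseteq \mathfrak{m}_p\} = \{h \mid -h.p \in V(I)\} = -S_{H,p}$, so $|S_{H,p}| = |H \cap T_P| \le |T_P/K| < \infty$. The main obstacle in the whole argument is the finite-index complication: $G$ need not equal $H\oplus K$ outright, so $T_P$ must be decomposed across the $N$ cosets of $G_0$ in $G$, and the translation trick is essential to propagate finiteness of $S_{H,p}$ on $V(I)$ to finiteness of $S_{H,V_i}$ on the translated subvarieties $V_i = -g_i.V(P)$, which may lie entirely outside $V(I)$.
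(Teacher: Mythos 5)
Your proposal is correct and follows essentially the same route as the paper: it reduces to finite generation of the modules $\bigoplus_{g}(P:I^g)/P\cdot g$ via Theorem \ref{STPrime}, uses the primality dichotomy $(P:I^g)\in\{C,P\}$ to identify these with $\bigoplus_{g\in T_P}(C/P)g$, shows finite generation is equivalent to finiteness of $T_P$ modulo $K$, and decomposes $G$ over the cosets of $H\oplus K$. Your ``translation trick'' is exactly the paper's key observation that finiteness of $S_{H,p}$ for $p\in V(I)$ propagates to all of $\spec C$, so the two arguments differ only in bookkeeping (the paper routes through $S_{G,V(J)}=\bigcap_{p\in V(J)}S_{G,p}$ and the tensor decomposition over $(C/I)\mypound K$, while you verify the cyclic-module structure directly).
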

\begin{proof}
We begin by proving $(1)$ implies $(2)$ using the following claims:
\begin{enumerate}
    \item [(a)]  For $\pi:G \to G/K$, the canonical map, $\pi(S_{G,p})$ is finite for all $p\in V(I)$;
    \item [(b)] $\pi(S_{G, V(J)})$ is finite for all prime ideals $J \lhd C$ where $I \subseteq J$;
    \item [(c)] 
    $\frac{(JB:IB)}{JB}$ is a finitely generated right $(C/I)\mypound K$-module for all prime ideals $J\lhd C$ where $I\subseteq J$.
\end{enumerate}
We proceed by showing $(1)\implies (\mathrm{a}) \implies (\mathrm{b}) \implies (\mathrm{c}) \implies (2)$.

Let us start with $(1)\implies (\mathrm{a})$. Assume for some complement $H\leq G$ of $K$ that $S_{H,p}$ is finite for all $p \in V(I)$. We note that this implies that $S_{H,p}$ is finite for all $p \in \spec C$; indeed, if $S_{H,p}$ were infinite for some $p$, then for any $a \in S_{H,p}$, $S_{H, a.p}$ is infinite and $a.p \in V(I)$.

Now $H+K = H \oplus K$ has finite index, say $m$, in $G$ and let $a_1,\dots, a_m \in G$ be coset representatives. Then $G = \bigsqcup_{i=1}^m(a_i+H+K)$ and so for $p \in V(I)$,
\begin{align*}
    S_{G,p}&=\{\alpha \in G \mid \alpha . p \in V(I)\}\\
    &=\bigsqcup_{i=1}^m\{\alpha \in a_i+H+K \mid \alpha . p \in V(I)\}\\
    &=\bigsqcup_{i=1}^m\left(\{\alpha \in a_i+H \mid \alpha . p \in V(I)\}+K\right)  \quad \quad \textrm{as } K= \Stab_{G}(I)\\
    &= \bigsqcup_{i=1}^m \left(a_i+\{h \in H \mid (h+a_i) . p \in V(I)\}+K\right) \\
    &=\bigsqcup_{i=1}^m(a_i+S_{H,a_i.p}+K).
\end{align*}
Hence, as $S_{H,a_i.p}$ is finite by assumption, $\pi(S_{G, p})$ is finite as required.

For $(\mathrm{a}) \implies (\mathrm{b})$,

let $J \lhd C$ be a prime ideal such that $I \subseteq J$. Since
$S_{G,V(J)} = \bigcap_{p\in V(J)} S_{G, p}$ the result follows and, as $S_{G,V(J)}$ is a $K$-set by Lemma \ref{sets}, $S_{G,V(J)}$ is a finite union of cosets of $K$.

Now for $(\textrm{b}) \implies(\textrm{c})$.

Again, let $J\lhd C$ be prime such that $I\subseteq J$. We have $\frac{(JB:IB)}{JB}= \bigoplus_{g \in G}\frac{(J:I^{g})}{J}g$ by Lemma \ref{idealiser}.
Then 
\begin{align*}
    \frac{(JB:IB)}{JB}&= \bigoplus_{g\in G}\frac{(J:I^{g})}{J}g = \bigoplus_{g\in S_{G,V(J)}}\frac{C}{J}g \quad \textrm{ by Equation $(1)$ }  \\
    &= \bigoplus_{j=1}^p \bigoplus_{k\in K} \frac{C}{J}(b_j+k)
\end{align*}
where  the $b_j\in G$ are the coset representatives for the finite set $\pi(S_{G,V(J)})$.
We note that $(C/J)b_j$ is a right $C/I$-module. We must check that $I$ acts trivially on $(C/J)b_j$. Since $C$ acts on $(C/J)b_j$ by
$\bar{f}b_jc = \bar{f}c^{b_j}b_j$ and $b_j\in S_{G,V(J)}$, $I^{b_j}\subseteq J$ as required. Hence $(C/J)b_jI =0$ and $$\oplus_j\oplus_k(C/J)(b_j+k) \cong \left(\oplus_{j} (C/J)(b_j+k)\right)\otimes_{C/I}(\oplus_k C/Ik) = \oplus_j \frac{C}{J} b_j \otimes_{C/I} ((C/I) \mypound K),$$
which is a finitely generated right $(C/I)\mypound K$-module.

Finally, we must prove that $(\mathrm{c})\implies (2)$. By Theorem  \ref{STPrime}, $R$ is right noetherian if and only if $\bigoplus_{g\in G}\frac{(P:I^g)}{P}g$ is a finitely generated right $R/IB$-module for all prime $J\lhd C$ such that $I\subseteq J$. By Proposition \ref{RIB}, $R/IB \cong (C/I)\mypound K$ is left and right noetherian, and so $R$ is right noetherian.

We move onto $(2)$ implies $(3)$. Suppose that there exists a complement $H\leq G$ to $K$ such that for some $p \in V(I)$,
$$S_{H,p} = \{h \in H \mid h.p \in V(I)\}$$ is infinite. Consider $M = I(p)$, the ideal of $C$ associated to $p \in V(I)$. We note, if $h_1, h_2 \in S_{H,p}$ are distinct, then $h_1+K \neq h_2+K$ as $H\cap K = \{0\}$.

Then
\begin{align*}
    \bigoplus_{g\in G}\frac{(M:I^{g})}{M}g &=  \bigoplus_{g\in S_{G,p}}\frac{C}{M}g \geq\bigoplus_{g\in S_{H,p}+K}\frac{C}{M}g\\
     &= \bigoplus_{\substack{a\in S_{H,p}\\k\in K}}\frac{C}{M}(a+k)\\
     &= \bigoplus_{a\in S_{H,p}}\frac{C}{M}a\otimes_{C/I}\bigoplus_{k\in K}\frac{C}{I}k\\
     &= \bigoplus_{a\in S_{H,p}}\frac{C}{M}a\otimes_{C/I}((C/I)\mypound K).
\end{align*}
By assumption, this is not a finitely generated right module over $(C/I)\mypound K \cong R/IB$-module, thus $R$ is not right noetherian by Theorem  \ref{STPrime}.

The implication $(3)$ implies $(1)$ is trivial. This completes the proof.
\end{proof}
\subsection{Left noetherianity}
Now we turn our attention to left noetherianity of our idealiser $R$. By Theorem  \ref{tor}, we must show that $R/IB \cong (C/I)\mypound K $ is left graded-noetherian and $\bigoplus_{g \in G}\Tor_1^C(C/I,C/P^g)g$ is a noetherian left $R$-module (equivalently, $R/IB$-module) for all prime ideals $P\lhd C$. We note that we may not restrict to only prime ideals which contain the ideal $I$. This is one aspect of the left structure of idealisers which is more complicated and highlights that idealisers are rings which can have different left and right structures.

By Proposition \ref{RIB}, $R/IB$ is left noetherian. We now characterise left noetherianity of $R = \mathbb{I}_B(IB)$.
\begin{theorem}\label{L}
Assume Notation \ref{Not}. Then the following are equivalent:
\begin{enumerate}
    \item[(1)] There exists a subgroup $H\leq G$, complementary to $K$, such that 
    $T_{H,V(J)} = \{a \in H \mid \Tor_1^C(C/I^{-a}, C/J)\neq 0\}$
    is finite for all prime ideals $J\lhd C$;
    \item[(2)] $R$ is left noetherian;
    \item[(3)] For all complementary subgroups $H$ to $K$, $T_{H,V(J)}$ is finite for all prime ideals $J \lhd C$.
\end{enumerate}
\end{theorem}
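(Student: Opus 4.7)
The plan is to mirror the argument for the right-noetherian case (Theorem \ref{R}), invoking Theorem \ref{tor} in place of Theorem \ref{STPrime}. By Theorem \ref{tor} together with Proposition \ref{RIB}, $R$ is left noetherian if and only if the left $(C/I)\mypound K$-module
\[ M(P) \,:=\, \bigoplus_{g \in G} \Tor_1^C(C/I,\, C/P^g)\, g \]
is finitely generated for every prime $P \lhd C$. Transport of structure under the automorphism $g$ of $C$ yields $\Tor_1^C(C/I, C/P^g) \cong \Tor_1^C(C/I^{-g}, C/P)$, so the $G$-support of $M(P)$ is precisely $T_{G, V(P)}$, which is a $K$-set by Lemma \ref{sets}.

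For $(1) \Rightarrow (2)$, fix a complement $H \leq G$ to $K$ with $T_{H, V(J)}$ finite for every prime $J$, and let $a_1, \dots, a_m$ be coset representatives for $G/(H \oplus K)$. Each $J^{a_i}$ remains prime, and a computation analogous to the one in the proof of Theorem \ref{R} (using that $K$ stabilises $I$) gives
\[ T_{G, V(J)} \;=\; \bigsqcup_{i=1}^m \bigl( a_i + T_{H, V(J^{a_i})} + K \bigr), \]
so $\pi(T_{G,V(J)})$ is finite. Picking coset representatives $b_1, \dots, b_p$ for $\pi(T_{G,V(J)})$, decompose
\[ M(J) \;=\; \bigoplus_{j=1}^p \bigoplus_{k \in K} \Tor_1^C(C/I,\, C/J^{b_j + k})\, (b_j + k). \]
Each $N_j := \Tor_1^C(C/I, C/J^{b_j})$ is finitely generated as a $C/I$-module because $C$ is noetherian and both arguments are cyclic. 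A finite generating set for $N_j$, sitting in the $b_j$-graded component, together with the left action of $K \subseteq R/IB$, covers the entire $K$-orbit piece, so $M(J)$ is finitely generated over $(C/I)\mypound K$, which is noetherian by Proposition \ref{RIB}.

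For $(2) \Rightarrow (3)$, suppose some complement $H$ and some prime $J$ yield $T_{H, V(J)}$ infinite. Since $H \cap K = \{0\}$, distinct elements of $T_{H, V(J)}$ lie in distinct $K$-cosets, so $M(J)$ contains, as a left $(C/I)\mypound K$-submodule, an infinite direct sum of nonzero summands supported on distinct $K$-orbits. This submodule cannot be finitely generated over $(C/I)\mypound K$, so by Theorem \ref{tor} $R$ is not left noetherian, contradicting $(2)$. The implication $(3) \Rightarrow (1)$ is immediate, since complements exist by the remark following the definition of \emph{complementary}.

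The main technical obstacle is verifying carefully that the $K$-translates of generators of $N_j$, arising from the restriction to $R$ of the natural $B$-module structure on $\Tor_1^B(B/IB, B/BJ) \cong M(J)$, really do generate the $K$-orbit piece $\bigoplus_{k \in K} \Tor_1^C(C/I, C/J^{b_j + k})(b_j + k)$. This amounts to a bookkeeping check parallel to part (c) of the proof of Theorem \ref{R}, tracking how the $K$-action on $C$ intertwines the various $\Tor_1^C(C/I, C/J^{b_j + k})$ through the naturality of $\Tor$.
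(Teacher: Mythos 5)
Your proposal is correct and follows essentially the same route as the paper: reduce via Theorem \ref{tor} and Proposition \ref{RIB}, decompose $T_{G,V(J)}$ into cosets of $H\oplus K$ to get finiteness of $\pi(T_{G,V(J)})$, and then show each $K$-orbit piece $\bigoplus_{k\in K}\Tor_1^C(C/I,C/J^{b_j+k})(b_j+k)$ is finitely generated, with the converse coming from the infinite direct sum over distinct $K$-cosets. The one step you flag as a remaining bookkeeping check is handled in the paper by a slightly slicker observation that avoids tracking generators: since $I$ is $K$-invariant, $\bigoplus_{k\in K}(I\cap J^{b_j+k})k$ and $\bigoplus_{k\in K}IJ^{b_j+k}k$ are graded left ideals of the noetherian ring $C\mypound K$, so their quotient is automatically a finitely generated left $C\mypound K$-module on which $I$ acts trivially.
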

\begin{proof}
We aim to show that $(1)$ implies that $\pi\left(T_{G,V(J)}\right)$ is finite. We note that $\{a \in H \mid \Tor_1^C(C/I^{-a},C/J)\neq 0\} = \{a \in H \mid \Tor_1^C(C/I,C/J^a)\neq 0\}$ and we use the latter in the proof for ease of notation. Again, $H\oplus K$ has finite index in $G$, say $m$, with coset representatives $a_1,\dots a_m$. Then we have, for $J\lhd C$,
\begin{align*}
    T_{G, V(J)} &= \{\alpha \in G \mid \Tor_1^C(C/I, C/J^{\alpha})\neq 0\} \\
                 &= \bigsqcup_{i=1}^m\{\alpha \in a_i+H+K \mid \Tor_1^C(C/I, C/J^{\alpha})\neq 0 \} \\
                 &= \bigsqcup_{i=1}^m \left(a_i+\{\alpha \in H \mid \Tor_1^C(C/I, C/J^{(a_i+\alpha)})\neq 0 \}+K\right) \textrm{ as in Lemma \ref{sets}}\\
                 &= \bigsqcup_{i=1}^m \left(a_i+T_{H,V(J^{a_i})}+K\right).
\end{align*}
Since $T_{H,V(J^{a_i})}$ is finite for $i=1,\dots,m$, we obtain that $\pi\left(T_{G,V(J)}\right)$ is finite. As $\pi\left(T_{G,V(J)}\right)$ is a right $K$-set by Lemma \ref{sets}, $T_{G,V(J)}$ is a finite union of cosets of $K$.

Then
\begin{align*}
    \Tor_1^B(B/IB, B/BJ) &= \frac{IB\cap BJ}{IBJ} =\bigoplus_{g \in G} \frac{Ig\cap gJ}{IgJ}\\
    &=\bigoplus_{g\in G} \frac{Ig\cap J^{g}g}{IJ^gg} = \bigoplus_{g\in G} \frac{I\cap J^{g}}{IJ^g}g \\
    &=\bigoplus_{g\in G} \Tor_1^C(C/I, C/J^g)g\\
                         &= \bigoplus_{g\in T_{G,V(J)}} \Tor_1^C(C/I, C/J^g)g\\
                         &= \bigoplus_{j=1}^m \bigoplus_{k \in K} \Tor_1^C(C/I, C/J^{b_j+k})(k+b_j)
\end{align*}
where the $b_j\in G$ are the coset representatives for the finite set $\pi\left(T_{G,V(J)}\right)$.

Observe that $\bigoplus_{k \in K} \Tor_1^C(C/I, C/J^{b_j+k})(b_j+k)$ is a finitely generated left $(C/I)\mypound K$-module. Indeed, using the $K$-invariance of $I$, both $\bigoplus_{k \in K}I\cap J^{b_j+k}k$ and $\bigoplus_{k\in K}IJ^{b_j+k}k$ are graded left ideals of $C\mypound K$, and hence are finitely generated left $C\mypound K$-modules. Thus $\bigoplus_{k\in K}\Tor_1^C(C/I,C/J^{b_j+k})k = \bigoplus_{k\in K}\frac{I\cap J^{b_j+k}}{IJ^{b_j+k}}k$ is a finitely generated left $C\mypound K$-module. As $I$ acts trivially, $\bigoplus_{k\in K}\Tor_1^C(C/I,C/J^{b_j+k})k$ is also a finitely generated left $(C/I)\mypound K$-module. Thus $\Tor_1^B(B/IB, B/BJ)$ is a finitely generated left $(C/I)\mypound K$-module. As $J$ was arbitrary, by Theorem  \ref{tor},  $R$ is left noetherian.

We now show $(2)\implies (3)$. Suppose there exists a complementary subgroup $H\leq G$ to $K$ such that for some prime ideal $J \lhd C$, $T_{H,V(J)}$ is infinite.

Then
\begin{align*}
\Tor_1^B(B/IB, B/BJ)&=\bigoplus_{g \in G}\Tor_1^C(C/I,C/J^g)g = \bigoplus_{g\in T_{G,V(J)}}\Tor_1^C(C/I,C/J^g) g\\
&\geq \bigoplus_{g \in T_{H,V(J)}\oplus K}\Tor_1^C(C/I,C/J^g)g\\
&=\bigoplus_{a \in T_{H,J}}\bigoplus_{k\in K}\Tor_1^C(C/I,C/J^{a+k})(a+k),
\end{align*}
which is an infinite direct sum of non-zero $(C/I)\mypound K$-modules. Hence $R$ is not left noetherian as required.
\end{proof}

To end this section, we apply our results in the case that $I$ is a maximal ideal of $C$. By Theorem \ref{R}, $R$ is right noetherian if and only if there exists a complementary subgroup $H\leq G$ to $\Stab_{G}(I)$ such that
$$S_{H,p}=\{h\in H\mid h.p=p\}$$
is finite where $p=V(I)$. But $S_{H,p} = H\cap \Stab_G(I) = 0$, so this always holds. Hence $R$ is always right noetherian. We note that this result may be considered a graded version of Theorem $1.1$.

Let us now consider left noetherianity. We require for some complementary subgroup $H\leq G$
$$\{h\in H\mid \Tor_1^C(C/I^{-h},C/J)\neq 0\}$$
to be finite for all prime $J\lhd C$. Since $\Tor_1^C(C/I^{-h},C/J)$ is supported on $V((-h).I)\cap V(J)$ the only way that $\Tor_1^C(C/I^{-h},C/J)\neq 0$ is if $J^h\subseteq I$ or equivalently $h.p=V(I)\in V(J)$. That is to say, for all subvarieties, $Z$, of $\spec C$, $\{h.p\}_{h \in H}\cap Z$ is finite. This property already exists in the literature and is known as {\em critical density}.

\begin{Definition}
Let $X$ be an affine variety and let $S$ be an infinite subset of $X$. We say $S$ is {\em critically dense} if $S\cap Y$ is finite for all subvarieties $Y$ of $X$. Equivalently, any infinite subset of $X$ is Zariski dense.
\end{Definition}

We summarise the preceding discussion in the following theorem.
\begin{theorem}
Assume Notation \ref{Not}. Suppose that $I$ is a maximal ideal of $C$ and let $p=V(I)$. Then $R$ is always right noetherian. Further, $R$ is left noetherian if and only if $G.p$ is critically dense.
\qed
\end{theorem}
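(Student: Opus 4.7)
The plan is to specialize Theorems \ref{R} and \ref{L} to the case $V(I) = \{p\}$, a single point. For right noetherianity, I will verify condition (1) of Theorem \ref{R} directly. Let $H \leq G$ be any subgroup complementary to $K = \Stab_G(I) = \Stab_G(p)$. Since $V(I)$ is the singleton $\{p\}$, we have $S_{H,p} = \{h \in H \mid h.p \in V(I)\} = \{h \in H \mid h.p = p\} = H \cap \Stab_G(p) = H \cap K = \{0\}$, which is finite. Thus Theorem \ref{R} gives that $R$ is right noetherian with no further hypothesis.

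For the left noetherianity equivalence, the crucial step is to identify $T_{H,V(J)}$ from Theorem \ref{L} with a geometric condition on the orbit of $p$. Since $I$ is maximal, $I^{-h}$ is also maximal and $C/I^{-h}$ is the residue field at the point corresponding to $I^{-h}$. From the short exact sequence $0 \to J \to C \to C/J \to 0$ one obtains
\[ \Tor_1^C(C/I^{-h}, C/J) \;\cong\; (I^{-h} \cap J)/I^{-h}J, \]
a module supported at $V(I^{-h}) \cap V(J)$, which is either empty or the single point $h.p$. If $h.p \notin V(J)$ the Tor vanishes; if $h.p \in V(J)$ and $J \neq 0$, then localizing at $h.p$ and applying Nakayama's lemma (using that $C$ is a noetherian domain so $J_{h.p}$ is a nonzero finitely generated module over the local ring) shows the Tor is nonzero; and for $J = 0$ it vanishes by flatness. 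Hence $T_{H,V(J)} = \{h \in H \mid h.p \in V(J)\}$, and since $H \cap K = \{0\}$ the orbit map $h \mapsto h.p$ is injective, providing a bijection $T_{H,V(J)} \longleftrightarrow H.p \cap V(J)$.

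The condition of Theorem \ref{L} thus reads: $H.p \cap V(J)$ is finite for every prime $J \lhd C$. Since the irreducible closed subvarieties of $\spec C$ are exactly the sets $V(J)$ for primes $J$, and any proper closed subvariety is a finite union of irreducible ones, this is precisely critical density of $H.p$. To pass from $H.p$ to $G.p$, write $G = \bigsqcup_{i=1}^{m}(a_i + H + K)$ with $a_i$ coset representatives of $H \oplus K$; then $G.p = \bigcup_i a_i.(H.p)$ is a finite union of $G$-translates of $H.p$. Since the $G$-action is by variety automorphisms, critical density is preserved under translates and under finite unions, so $H.p$ is critically dense if and only if $G.p$ is. The only real obstacle is bookkeeping, namely keeping the action conventions on ideals versus on points (and on $h$ versus $-h$) consistent between $S_{H,p}$ and $T_{H,V(J)}$; once this is in place, the theorem is immediate from Theorems \ref{R} and \ref{L}.
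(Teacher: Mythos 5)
Your proposal follows essentially the same route as the paper: specialise Theorems \ref{R} and \ref{L}, observe that $S_{H,p}=H\cap K=\{0\}$ for the right-hand side, and identify the Tor-nonvanishing set with $\{h\in H\mid h.p\in V(J)\}$ to recover critical density. The paper's own treatment is terser — it only notes that $\Tor_1^C(C/I^{-h},C/J)$ is supported on $V(I^{-h})\cap V(J)$, leaving the converse nonvanishing (your Nakayama step) and the passage from $H.p$ to $G.p$ implicit — so your added detail is welcome but not a genuinely different argument.
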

\subsection{Critical Transversality}
In the previous two sections we have found conditions on for $\mathbb{I}_B(IB)$ to be right or left noetherian which we note are very different. For the right-hand side we have a condition which is based on the orbit of points in $V(I)$. However, on the other side the condition for left noetherianity is much less clear. In this section we show that this condition has a geometric analogue as Sierra showed for twisted homogeneous coordinate rings in \cite{Sierra} and we prove Theorem \ref{T2}.

We now show that the condition of left noetherianity is closely related to the notion of critical transversality as defined in the introduction. 
\begin{lemma}\label{CTLN}
Assume Notation \ref{Not}. Then the following are equivalent:
\begin{enumerate}
    \item [(1)] For all prime ideals $J\lhd C$, the set 
    $$\{h \in H\mid \Tor_1^C(C/I^{-h},C/J)\neq 0\}$$
    is finite.
    \item[(2)] For all prime ideals $J\lhd C$, the set
    $$A(J) = \{h \in H \mid V(I^{-h}) \textrm{ is not homologically transverse to } V(J)\}$$
    is finite.
\end{enumerate}
\end{lemma}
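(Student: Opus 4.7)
The direction $(2)\Rightarrow(1)$ is immediate, since the $\Tor_1$-nonvanishing set in (1) is plainly contained in $A(J)$: non-vanishing of $\Tor_1^C(C/I^{-h},C/J)$ is just one special way that $V(I^{-h})$ can fail to be homologically transverse to $V(J)$.

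For $(1)\Rightarrow(2)$, my plan is a dimension-shifting induction that relates vanishing of higher $\Tor_j$ to $\Tor_1$ of an auxiliary module, to which the hypothesis (after a prime filtration) applies. Concretely, I would first establish, by induction on $j\geq 1$, that for every finitely generated $C$-module $M$ the set
\[ T_j(M):=\{h\in H\mid \Tor_j^C(C/I^{-h},M)\neq 0\} \]
is finite. The base case $j=1$ for $M=C/P$ prime is hypothesis (1); I extend it to arbitrary finitely generated $M$ via a prime filtration $0=M_0\subset M_1\subset\cdots\subset M_n=M$ with $M_i/M_{i-1}\cong C/P_i$ (exactly as used in the proof of Theorem~\ref{tor}), the resulting long exact sequences in Tor giving $T_1(M)\subseteq\bigcup_i T_1(C/P_i)$, a finite union of finite sets. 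For the inductive step $j\geq 2$, I pick a short exact sequence $0\to K\to F\to M\to 0$ with $F$ free of finite rank; since $C$ is noetherian, $K$ is finitely generated, and flatness of $F$ gives $\Tor_j^C(-,M)\cong \Tor_{j-1}^C(-,K)$, reducing to the inductive hypothesis applied to $K$.

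To deduce that $A(J)=\bigcup_{j\geq 1} T_j(C/J)$ is itself finite, I would use that $G$ acts on $C$ by ring automorphisms, so each $C/I^{-h}$ is obtained from $C/I$ by pullback along an automorphism of $C$; in particular, all these modules share the common projective dimension $d=\pdim_C(C/I)$. In the regular setting relevant to the applications (notably $C=\CC[x_1,\dots,x_n]$, as in Section~5), $d$ is finite, so $\Tor_j^C(C/I^{-h},C/J)=0$ for all $j>d$ and $A(J)=\bigcup_{j=1}^{d}T_j(C/J)$ is a finite union of finite sets.

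The main obstacle is precisely this last step: the induction above controls each $T_j(C/J)$ separately, but collapsing the union over all $j$ into a finite union requires a uniform bound on the $j$ for which $\Tor_j^C(C/I^{-h},C/J)$ can be nonzero. This is automatic when $\pdim_C(C/I)<\infty$, an implicit regularity-type assumption that is satisfied in the examples of interest but is not explicit in Notation~\ref{Not}.
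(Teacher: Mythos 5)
Your reduction coincides with the paper's up to the point you flag yourself: the trivial direction $(2)\Rightarrow(1)$, the induction on $j$ via prime filtrations (for $j=1$) and dimension shifting along $0\to K\to C^m\to M\to 0$ (for $j\geq 2$) showing each set $T_j(M)=\{h\in H\mid \Tor_j^C(C/I^{-h},M)\neq 0\}$ is finite, and the recognition that everything turns on a uniform bound for the $j$ with $\Tor_j^C(C/I^{-h},C/J)\neq 0$. But the step you dismiss as an ``implicit regularity-type assumption'' is a genuine gap: Notation \ref{Not} only assumes $C$ is a commutative noetherian domain, so you may not invoke $C=\CC[x_1,\dots,x_n]$ or any regularity of $C$, and as written your argument proves the lemma only under an added hypothesis that is not part of the statement.

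The missing idea is that $\pdim_C(C/I)<\infty$ can be \emph{deduced} from hypothesis $(1)$ rather than assumed. Having established that $T_j(M)$ is finite for every finitely generated $M$ and every $j\geq 1$, apply this to $M=C/A$ with $A$ an $H$-invariant ideal: since $\Tor_j^C(C/I^{-h},C/A)\cong \Tor_j^C(C/I,C/A^{h})=\Tor_j^C(C/I,C/A)$, the set $T_j(C/A)$ is either empty or all of $H$; as one may assume $H$ infinite (otherwise $(2)$ is trivial), finiteness forces it to be empty. Hence $V(I)$ is homologically transverse to every $H$-invariant subvariety of $\spec C$, in particular to each stratum of the singular stratification, since these strata are preserved by all automorphisms of $C$ and so are $H$-invariant. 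Lemma \ref{PD} (Hochster's lemma, quoted from \cite[Lemma 5.3]{Sierra}) then gives $\pdim_C(C/I)<\infty$, whence $A(J)=\bigcup_{j=1}^{\pdim_C(C/I)}T_j(C/J)$ is a finite union of finite sets. This singular-stratification argument is exactly what lets the paper avoid any regularity hypothesis on $C$.
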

Before beginning the proof, we establish some terminology which we will need if $\spec C$ is singular.
\begin{Definition}
Let $X = \spec C$ be an affine variety. We define the {\em singular stratification} of $X$ iteratively as follows:

define $X^{(1)} = \{m \in \mspec(C) \mid \gdim_C\left(C_{(m)}\right) = \infty\}$ to be the singular locus of $X$, which is closed, and define $X^{(n)}$ to be the singular locus of $X^{(n-1)}$. The singular stratification is preserved under automorphisms.
\end{Definition}
We have the following Lemma, originally due to Mel Hochster.
\begin{lemma}\cite[Lemma 5.3]{Sierra}\label{PD}
Suppose that $V(I)$ is homologically transverse to all parts of the singular stratification of $\spec C$. Then
$$\pdim_C(C/I)<\infty.$$
\end{lemma}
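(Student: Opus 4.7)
The plan is to establish a uniform bound $\pdim_{C_{(p)}}(C/I)_{(p)} \leq \dim C$ at every point $p \in \spec C$. Since $C/I$ is finitely generated over the noetherian ring $C$, global projective dimension equals the supremum of localized projective dimensions at maximal ideals, so such a pointwise bound implies $\pdim_C(C/I) \leq \dim C < \infty$. For $p \notin V(I)$ the localization $(C/I)_{(p)}$ vanishes and there is nothing to show, so the argument reduces to points $p \in V(I)$.

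Write the singular stratification as $\spec C = X^{(0)} \supsetneq X^{(1)} \supsetneq \cdots \supsetneq X^{(N)} \supsetneq X^{(N+1)} = \emptyset$, which terminates because dimension drops strictly at each step, and let $J_i \lhd C$ be the (radical) ideal defining $X^{(i)}$. By hypothesis, $\Tor_j^C(C/I, C/J_i) = 0$ for all $j \geq 1$ and all $i$. For $p \in V(I)$, let $i = i(p)$ be the largest index with $p \in X^{(i)}$; then $p \in X^{(i)} \setminus X^{(i+1)}$, so $p$ is a smooth point of $X^{(i)}$, and $(C/J_i)_{(p)}$ is a regular local ring of dimension at most $\dim C$.

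The key algebraic input is a change-of-rings identification supplied by homological transversality. If $F_{\bullet} \twoheadrightarrow C/I$ is a free $C$-resolution, the vanishing of the higher $\Tor$s says precisely that $F_{\bullet} \otimes_C C/J_i$ is a complex of free $C/J_i$-modules acyclic outside degree $0$, i.e.\ a free $C/J_i$-resolution of $C/(I+J_i)$. Consequently, for any $C/J_i$-module $M$,
$$\Tor_j^C(C/I,M) \;=\; H_j(F_{\bullet} \otimes_C M) \;=\; H_j\bigl((F_{\bullet} \otimes_C C/J_i) \otimes_{C/J_i} M\bigr) \;=\; \Tor_j^{C/J_i}\bigl(C/(I+J_i),M\bigr).$$
Applying this with $M = \kappa(p)$, which is a $C/J_i$-module since $J_i \subseteq \mathfrak{m}_p$, and using regularity of $(C/J_i)_{(p)}$, the right hand side vanishes for $j > \dim C$. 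The residue-field criterion for projective dimension over a local ring then yields $\pdim_{C_{(p)}}(C/I)_{(p)} \leq \dim C$, uniformly in $p$.

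The main obstacle is erecting the change-of-rings identification rigorously; this is where the full strength of homological transversality (vanishing of all higher $\Tor$s, not just $\Tor_1$) is used. After that, the Auslander--Buchsbaum--Serre description of regular local rings via finite global dimension, together with finiteness of the singular stratification of a noetherian affine variety of finite Krull dimension, delivers the conclusion.
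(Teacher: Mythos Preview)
The paper does not give its own proof of this lemma; it simply cites \cite[Lemma 5.3]{Sierra} and attributes the result to Hochster. Your argument is correct and is essentially the expected one: the change-of-rings identity $\Tor_j^C(C/I,M)\cong \Tor_j^{C/J_i}(C/(I+J_i),M)$ for $C/J_i$-modules $M$, which follows exactly as you say from the hypothesis that $F_\bullet\otimes_C C/J_i$ remains acyclic, combined with the regularity of $(C/J_i)_{(p)}$ on the open stratum and the local criterion for projective dimension via vanishing of $\Tor$ against the residue field, is the standard route through this statement and matches the argument in the cited reference.

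Two small remarks on presentation. First, you invoke the identity $\pd_C(C/I)=\sup_{\mathfrak m}\pd_{C_{\mathfrak m}}(C/I)_{\mathfrak m}$ over maximal ideals, but then speak of arbitrary $p\in\spec C$; since only maximal ideals are needed, and the paper's definition of the singular stratification is in terms of $\mspec C$, it is cleanest to keep $p$ a closed point throughout. Second, the finiteness of $\dim C$ and closedness of each $X^{(i)}$ are being used implicitly; both are guaranteed because in the paper's setting $\spec C$ is an affine variety over $\kk$, hence $C$ is a finitely generated $\kk$-algebra and in particular excellent of finite Krull dimension. You allude to this in your final paragraph, which is enough.
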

\begin{proof}[Proof of Lemma \ref{CTLN}]
That $(2)$ implies $(1)$ is trivial. 

Assume $(1)$. We may assume that $H$ is infinite. We note that 
$$A(J) = \bigcup_{j\geq 1}\{h \in H\mid \Tor_j^C(C/I^{-h},C/J)\neq 0\}.$$
We note that whilst standard homological arguments imply that 
\begin{align*}A_j(J) = \{h \in H\mid \Tor_j^C(C/I^{-h},C/J)\neq 0\}\end{align*} is finite for each $j\geq 1$, this is not enough to conclude $(2)$. However, if the projective dimension of $C/I$ is finite, $(2)$ immediately follows.

We first claim that for any finitely generated $C$-module $M$ and $j\geq 1$, the set 
$$\{h \in H \mid \Tor_j^C(C/I^{-h},M)\neq 0\}$$
is finite. We induct on $j\geq 1$. Firstly, for $j=1$. By \cite[Proposition 3.7]{Eis}, $M$ has a filtration 
$$0 = M_0\subset M_1 \subset \dots \subset M_n=M$$
with each $M_{i+1}/M_i \cong C/P_i$ for some prime ideal $P_i\lhd C$. A similar argument to Theorem \ref{tor}, inducting on $n$ gives that $\{h \in H \mid \Tor_1^C(C/I^{-h},M)\neq 0\}$ is finite. Now let $j>1$. We may construct a short exact sequence
$$0 \to K \to C^m \to M \to 0$$
where $K$ is a finitely generated $C$-module. From the long exact sequence in Tor we obtain
$$\Tor_j^C(C/I^{-h},M) \cong \Tor_{j-1}^C(C/I^{-h},K),$$
from which the claim follows by induction as the right-hand side vanishes for all but finitely many $h\in H$.

From the claim, $V(I)$ is homologically transverse to all $H$-invariant subvarieties of $\spec C$. This is because, if $J$ is $H$-invariant, $A_{j}(J)$ is either all of $H$ or trivial and, as $\{h \in H\mid \Tor_1^C(C/I^{-h},C/J)\neq 0\}$ is finite, it must be the latter. In particular, $V(I)$ is homologically transverse to the singular stratification of $\spec C$ as each of the terms in the singular stratification corresponds to a factor $C/A$ where $A$ is an $H$-invariant ideal of $C$. Hence $C/I$ has finite projective dimension by Lemma \ref{PD} and $A_j(J) = 0$ for all $j > \pdim_C C/I$. Thus $A(J) = \bigcup_{j\geq 1} A_j(J)$ is finite as required.
\end{proof}
The following Theorem now follows as a corollary of Lemma \ref{CTLN}.
\begin{theorem}[Theorem \ref{T2}]\label{finalLeft}
Assume Notation \ref{Not}. Then $R$ is left noetherian if and only if, for some complement $H\leq G$ of $K$, $\{I^{-h}\}_{h \in H}$ is critically transverse.\qed
\end{theorem}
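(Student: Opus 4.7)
The plan is to chain Theorem \ref{L} with Lemma \ref{CTLN}. By Theorem \ref{L}, $R$ is left noetherian if and only if there exists a complement $H$ of $K = \Stab_G(I)$ in $G$ such that, for every prime ideal $J\lhd C$, the set $T_{H,V(J)} = \{h\in H \mid \Tor_1^C(C/I^{-h},C/J)\neq 0\}$ is finite. Lemma \ref{CTLN} then translates this algebraic condition into the geometric one: for every prime $J$, the set $A(J) = \{h\in H \mid V(I^{-h}) \text{ is not homologically transverse to } V(J)\}$ is finite. So after these two invocations, the only question left is whether one may replace ``every prime $J$'' by ``every ideal $L$,'' which is what the definition of critical transversality requires.

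The first direction of that upgrade is free: every prime is in particular an ideal, so critical transversality of $\{I^{-h}\}_{h\in H}$ obviously implies the statement restricted to primes, and hence (via Lemma \ref{CTLN} and Theorem \ref{L}) implies left noetherianity of $R$. For the reverse, fix an arbitrary ideal $L\lhd C$ and note that the proof of Lemma \ref{CTLN} already establishes a stronger fact under its hypothesis (1): for every finitely generated $C$-module $M$ and every $j\geq 1$, the set $\{h\in H \mid \Tor_j^C(C/I^{-h},M)\neq 0\}$ is finite. That is proved by filtering $M$ via \cite[Proposition 3.7]{Eis} into pieces of the form $C/P_i$ and inducting along the long exact sequence in $\Tor$; and the sum over $j$ is in turn finite because $\pdim_C(C/I)<\infty$, which follows from Lemma \ref{PD} once one knows that $V(I)$ is homologically transverse to each $H$-invariant stratum of the singular stratification of $\spec C$. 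Applying this to $M = C/L$ gives that $V(I^{-h})$ is homologically transverse to $V(L)$ for all but finitely many $h\in H$, which is exactly $A(L)$ being finite.

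Finally, since $H$ is a subgroup of $G$, the map $h\mapsto -h$ is a bijection of $H$, so $\{I^{-h}\}_{h\in H}$ and $\{I^h\}_{h\in H}$ agree as families and the statement matches the definition of critical transversality given in the introduction. The only step requiring any genuine work is the passage from primes to arbitrary ideals, and that work has essentially already been done inside Lemma \ref{CTLN}; no new ideas are needed, so the theorem really does follow as a corollary rather than requiring a separate argument.
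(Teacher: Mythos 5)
Your proposal is correct and follows the same route as the paper, which simply deduces Theorem \ref{finalLeft} as a corollary of Theorem \ref{L} together with Lemma \ref{CTLN}. Your extra paragraph justifying the passage from prime ideals to arbitrary ideals (via the finitely-generated-module claim and $\pdim_C(C/I)<\infty$ inside the proof of Lemma \ref{CTLN}) is exactly the step the paper leaves implicit, and you fill it in correctly.
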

As mentioned in the introduction, results obtained by Sierra for idealisers in twisted homogeneous co-ordinate rings are similar to our results in the case that $G = \ZZ$.
\begin{theorem}\cite[Theorem 10.2]{Sierra}\label{notvague}
Let $X$ be a projective variety, let $\sigma \in \Aut_{\kk} X$, let $\mathcal{L}$ be a $\sigma$-ample invertible sheaf on $X$, and let $Z$ be an irreducible, closed subscheme of $X$ of infinite order under $\sigma$. Let $B = (X,\mathcal{L},\sigma)$ be a twisted homogeneous coordinate ring and let $I$ be the right ideal of $B$ corresponding to $Z$. 

If for all $p\in Z$, the set $\{n\geq 0 \mid \sigma^n(p)\in Z\}$ is finite then $\mathbb{I}_B(I)$ is right noetherian. 
If the set $\{\sigma^nZ\}_{n \in \ZZ}$ is critically transverse, then $\mathbb{I}_B(I)$ is left noetherian.

\end{theorem}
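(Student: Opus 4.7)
The plan is to deduce Theorem \ref{notvague} from the graded-noetherianity criteria of Propositions \ref{right-hom} and \ref{Toriff}, whose hypotheses apply since $B = B(X, \mathcal{L}, \sigma)$ is a $\ZZ$-graded noetherian domain whenever $\mathcal{L}$ is $\sigma$-ample \cite{AV}. The essential step is to translate the algebraic conditions on $\Hom_B(B/I, B/J)$ and $\Tor_1^B(B/I, B/K)$ into the geometry of the $\sigma$-orbit of $Z$ in $X$, using the dictionary between graded $B$-modules and coherent sheaves on $X$.

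For the right-noetherian statement I would invoke Proposition \ref{right-hom} and note that, via the Artin--Van den Bergh correspondence \cite{AV}, graded right ideals of $B$ containing $I$ correspond to closed subschemes $W \subseteq Z$ of $X$. The graded pieces of $\Hom_B(B/I, B/J) \cong (J:I)/J$ admit a sheaf-theoretic description: the $n$-th component is cut out by sections of $\mathcal{L}_n$ whose product with $\mathcal{I}_Z$ lies in $\mathcal{I}_W^{\sigma^n}$, so nontrivial contributions come only from $n$ with $\sigma^n(p) \in Z$ for points $p$ of the support of $W$. The orbit hypothesis ensures, for every closed point $p \in Z$, that only finitely many translates $\sigma^n(p)$ lie in $Z$; combined with $\sigma$-ampleness (which gives cohomological vanishing and generation in high degree), this bounds the contribution of each component of the support and yields that $(J:I)/J$ is finitely generated as a graded right $R/I$-module.

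For the left-noetherian statement I would use Proposition \ref{Toriff} to reduce to showing $\Tor_1^B(B/I, B/K)$ is graded-noetherian as a left $R/I$-module for every graded left ideal $K$. Taking $K$ to correspond to a subscheme $W \subseteq X$, the $n$-th graded piece of $\Tor_1^B(B/I, B/K)$ is controlled, after the $\sigma$-twist built into multiplication in $B$, by $\shTor_1^{\mathcal{O}_X}(\mathcal{O}_Z, \mathcal{O}_{W})$ evaluated after translating $Z$ by $\sigma^n$. Critical transversality of $\{\sigma^n Z\}_{n \in \ZZ}$ supplies exactly the vanishing of these sheaf Tors for all but finitely many $n$, so only finitely many graded pieces are nonzero; each nonzero piece is a coherent sheaf supported on $Z \cap \sigma^n W$ and so is finitely generated over the twisted coordinate ring of $Z$, which embeds in $R/I$.

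The main obstacle will be the sheaf-cohomological bookkeeping. Unlike the skew group ring setting of Notation \ref{Not}, where $B$ is strongly graded and every graded left or right ideal is of the form $BJ$ or $JB$ for a single $J \lhd C$ (so $\Hom$ and $\Tor$ collapse to the module-theoretic formulas of Lemma \ref{idealiser} and Theorem \ref{tor}), here $B$ is only twisted-graded and one must pass between global sections of $\mathcal{L}_n$ and their underlying sheaves on $X$ and its translates $\sigma^n Z$ with care. Extracting finite generation over $R/I$ from the geometric finiteness statements rests on the cohomological vanishing provided by $\sigma$-ampleness in high degree, and this is the most delicate piece of the argument.
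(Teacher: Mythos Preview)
The paper does not prove Theorem~\ref{notvague}: it is quoted verbatim from \cite[Theorem~10.2]{Sierra} and is followed immediately by the remark that ``both of the sets involved correspond with those we obtained in Theorems~\ref{R} and~\ref{L} in the case that the stabiliser is trivial and $G = \ZZ$.'' There is no proof environment attached to it in this paper, so there is nothing here to compare your proposal against.

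That said, your outline is broadly in the spirit of how the result is obtained in \cite{Sierra}: one reduces via criteria of the type in Propositions~\ref{right-hom} and~\ref{Toriff} (Sierra uses the versions in \cite{Rog} and \cite{R-generic}) and then computes $\Hom$ and $\Tor$ sheaf-theoretically using $\sigma$-ampleness. Two cautions if you pursue it. First, in the twisted homogeneous coordinate ring setting graded right ideals containing $I$ do \emph{not} correspond simply to closed subschemes $W\subseteq Z$; the correspondence is only up to torsion in high degree, and one must work through the quotient category $\qgr B$ and invoke $\sigma$-ampleness to control low-degree discrepancies. Your phrase ``graded right ideals of $B$ containing $I$ correspond to closed subschemes $W\subseteq Z$'' elides this, and it is exactly the ``sheaf-cohomological bookkeeping'' you flag as the obstacle. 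Second, for the left side you need the orbit condition on points (right noetherianity of $R$) to even know $R/I$ is noetherian before you can apply the $\Tor$ criterion, so the two halves are not as independent as your write-up suggests; Sierra handles this by first establishing that the right-side hypothesis is implied by critical transversality.
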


We note that both of the sets involved correspond with those we obtained in Theorems \ref{R} and \ref{L} in the case that the stabiliser is trivial and $G = \ZZ$.

\section{Idealisers defined by subvarieties of the plane}
Now that we have an abstract set of conditions for when idealiser rings are left and right noetherian, let us see how these work in practice. We consider $C = \CC[x,y]$ and $\ZZ^2\subseteq \CC^2$ acting by translation. We will see that the noetherianity of the idealisers we obtain depends on subtle arithmetic results about integer points on varieties.
\begin{example}\label{EX}
Let us consider $C=\CC[x,y]$, and $\sigma, \tau \in \Aut_{\CC}(\CC[x,y])$ defined by 
\begin{align*}
    &\sigma(x)=x+1,\quad \sigma(y)=y,\\
    &\tau(x)=x,\quad \quad \ \ \tau(y)=y+1.
\end{align*}
Define $B = C\mypound \ZZ^2$ with $\ZZ^2 = (\sigma,\tau)$ acting on $C$ by translation.
By Bezout's theorem, the only irreducible curves with non-trivial stabiliser under this action are lines with rational slope (including slope $\infty$). We shall treat this case separately.

If $V(I)$ is not such a line then $I\lhd C$ is prime with trivial stabiliser. By Theorem \ref{R}, $\mathbb{I}_B(IB)$ is right noetherian if and only if
$$\{a\in \ZZ^2 \mid a.p\in V(I)\}$$
is finite for all $p\in V(I)$. This is equivalent to
$$V(I)\cap (b+\ZZ^2)$$
being finite for all $b\in \CC^2$.

Further, $\mathbb{I}_B(IB)$ is left noetherian if and only if 
$$\{a \in \ZZ^2 \mid \Tor_1^C(C/I^{-a},C/J)\neq 0\}$$
is finite for all prime ideals $J\lhd C$.

Our task is to understand these geometric conditions better. We shall split our work into two cases: when $I$ is and is not maximal. Firstly, when $I$ is maximal, then $\Stab_\ZZ^2(I)$ is trivial and we claim that $\mathbb{I}_B(IB)$ right but it is not left noetherian. Indeed, if $I$ is maximal then $\mathbb{I}_B(IB)$ is easily shown to be right noetherian by Theorem \ref{STPrime}. Also, as $V(I) = p$ for some point $p=(p_1,p_2) \in \mathbb{A}^2$, let us consider the subvariety $Y = V(x-p_1)\subseteq \mathbb{A}^2$. As $\tau^{i}(p)\in  Y$ for all $i \in \ZZ$, $\ZZ^2.V(I)$ cannot be critically dense and thus, by Theorem $3.15$, $\mathbb{I}_B(IB)$ is not left noetherian.
We sumarise this in the following Proposition.
\begin{proposition}
Assume the setup from Example \ref{EX}. Suppose $I$ is a maximal ideal. Then $\mathbb{I}_B(IB)$ is right but not left noetherian.\qed
\end{proposition}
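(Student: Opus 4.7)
The plan is to invoke the characterisation of noetherianity for idealisers at maximal ideals proved at the end of Section 4.2, namely that $R$ is automatically right noetherian while left noetherianity is equivalent to critical density of the orbit $G.p$, and then exhibit an explicit subvariety that spoils critical density in the translation action of $\mathbb{Z}^2$ on $\mathbb{A}^2$.

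First I would handle right noetherianity. Because $I$ is maximal with $V(I) = \{p\}$ for a single point $p = (p_1,p_2) \in \mathbb{A}^2$, the stabiliser $K = \Stab_{\mathbb{Z}^2}(I)$ is trivial (the translation by any nonzero vector moves $p$). Thus the hypotheses of the theorem at the end of Section 4.2 apply verbatim, yielding $R$ right noetherian without any further work.

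For left noetherianity, I would use the same theorem, which reduces the question to whether $\mathbb{Z}^2.p$ is critically dense in $\mathbb{A}^2$. The plan here is to construct a subvariety that meets the orbit in infinitely many points. Take $Y = V(x - p_1)$, the vertical line through $p$. Then $Y \cap \mathbb{Z}^2.p \supseteq \{(p_1, p_2+n) \mid n \in \mathbb{Z}\}$, which is infinite, so $\mathbb{Z}^2.p$ fails critical density. Hence $R$ is not left noetherian.

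Since both halves are direct applications of a previously proved result, there is no real obstacle: the only item to check carefully is that the maximal ideal $I$ indeed has trivial stabiliser, which is immediate from the fact that $\mathbb{Z}^2$ acts freely on $\mathbb{A}^2$ by translation. Packaging these two observations into a two-sentence proof completes the proposition.
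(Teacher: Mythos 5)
Your proposal is correct and follows essentially the same route as the paper: the paper likewise gets right noetherianity for free from the general results (Theorem \ref{STPrime} / the maximal-ideal theorem at the end of Section 4.2) and disproves left noetherianity by observing that the vertical line $Y = V(x-p_1)$ meets the orbit $\ZZ^2.p$ in the infinite set $\{\tau^i(p)\}_{i\in\ZZ}$, so the orbit is not critically dense. The only cosmetic difference is that the triviality of the stabiliser is not actually needed for the maximal-ideal theorem, but noting it does no harm.
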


Now we consider what happens when $I$ is prime but not maximal, then $I=(f)$ for some irreducible polynomial $f \in \CC[x,y]$ and corresponds to a plane curve in $\mathbb{A}^2$. We now view $\mathbb{I}_B(IB)$ as a subring of the skew field of fractions associated to $B$, $Q(B)$, in which $f$ is invertible. Then the conjugation action by $f$:
\begin{align*}
    \theta \colon \ &Q(B) \to Q(B),\\ 
    &\theta(a) = f^{-1}af
\end{align*}
is an isomorphism of $Q(B)$. We note that $\mathbb{I}_B(IB) = C+fB$ and so $\theta(\mathbb{I}_B(IB)) = C+Bf = \mathbb{I}_B(BI)$ as $f$ commutes with $C$. Thus $\theta$ restricted to $\mathbb{I}_B(IB)$ induces an isomorphism between $\mathbb{I}_B(IB)$ and $\mathbb{I}_B(BI)$. Hence $\mathbb{I}_B(IB)$ is left noetherian if and only if $\mathbb{I}_B(BI)$ is left noetherian which, by the opposite-sided version of Theorem \ref{R}, happens if and only if $V(I)\cap (b+\ZZ^2)$ is finite for all $b\in \CC^2$. That is to say, $\mathbb{I}_B(IB)$ is left noetherian if and only if it is right noetherian. We note that if $I = (f)$ then $\Tor_1^C(C/I,C/J)=\neq 0$ if and only if $J\supseteq I$. So the Tor condition also gives that $\mathbb{I}_B(IB)$ is left noetherian if and only if $V(I)\cap (b+\ZZ^2)$ is finite. We summarise the above discussion in the following Proposition.
\begin{proposition}\label{ints}
Assume the setup from Example \ref{EX}. Suppose $I$ is a non-maximal prime ideal that does not correspond to a line of rational slope. Then $\mathbb{I}_B(IB)$ is left and right noetherian if and only if the set
$$V(I)\cap(b+\ZZ^2)$$
is finite for all $b\in \CC^2$.\qed
\end{proposition}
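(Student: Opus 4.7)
The plan is to invoke Theorems \ref{R} and \ref{finalLeft} directly, using the fact that the only irreducible curves with non-trivial $\mathbb{Z}^2$-stabiliser are lines of rational slope. Under the hypothesis on $I$, we have $I=(f)$ for an irreducible polynomial and $K=\Stab_{\mathbb{Z}^2}(I)=\{0\}$, so $H=\mathbb{Z}^2$ is itself a valid complement to $K$, which simplifies both criteria. I will also use the preliminary equivalence: $V(I)\cap(b+\mathbb{Z}^2)$ is finite for every $b\in\mathbb{C}^2$ if and only if it is finite for every $p\in V(I)$, which holds because a non-empty intersection $V(I)\cap(b+\mathbb{Z}^2)$ produces some $p\in V(I)$ for which $p+\mathbb{Z}^2=b+\mathbb{Z}^2$.

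For right noetherianity, Theorem \ref{R} with $H=\mathbb{Z}^2$ reads: $\mathbb{I}_B(IB)$ is right noetherian iff $S_{\mathbb{Z}^2,p}=\{a\in\mathbb{Z}^2\mid a.p\in V(I)\}$ is finite for every $p\in V(I)$. Since the action is by translation, this set is in bijection with $V(I)\cap(p+\mathbb{Z}^2)$, so right noetherianity is exactly the stated condition. For left noetherianity I will exploit that $I=(f)$ is principal: the resolution $0\to C\xrightarrow{f}C\to C/I\to 0$ gives that $\Tor_1^C(C/I^{-a},C/J)$ is the $f^{-a}$-torsion in $C/J$, which for $J$ prime is non-zero if and only if $V(J)\subseteq V(I^{-a})=a.V(I)$.

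With this reformulation, Theorem \ref{finalLeft} demands that $T_{\mathbb{Z}^2,V(J)}=\{a\in\mathbb{Z}^2\mid V(J)\subseteq a.V(I)\}$ be finite for every prime $J\lhd C$. I will split by dimension of $V(J)$. If $V(J)$ is a curve, the inclusion $V(J)\subseteq a.V(I)$ must be an equality of irreducible curves, and triviality of the stabiliser forces at most one such $a$; so $T_{\mathbb{Z}^2,V(J)}$ is finite (indeed has at most one element). If $V(J)=\{q\}$ is a point, then $T_{\mathbb{Z}^2,q}=\{a\in\mathbb{Z}^2\mid q-a\in V(I)\}$ is in bijection with $V(I)\cap(q-\mathbb{Z}^2)=V(I)\cap(q+\mathbb{Z}^2)$. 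Thus the left-noetherian criterion collapses to the same geometric condition as the right-noetherian one, completing the proof.

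I do not anticipate a serious obstacle: the work is essentially bookkeeping, and the author's discussion preceding the proposition already outlines both directions. The one subtle point to be careful about is matching conventions, namely verifying that the action of $a\in\mathbb{Z}^2$ on $\spec C$ coming from $\sigma,\tau$ is indeed translation by $a$ (up to sign), so that $V(I^{-a})=a+V(I)$ as a subset of $\mathbb{A}^2$; once that is in place both the curve and point cases fall out immediately.
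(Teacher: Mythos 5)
Your proof is correct, but it takes a genuinely different route from the paper's featured argument for the left-hand side (the paper does record your route, in compressed form, in a single closing sentence). For right noetherianity you and the paper agree: apply Theorem \ref{R} with $H=\ZZ^2$, which is a legitimate complement since $\Stab_{\ZZ^2}(I)=0$ for any irreducible curve other than a line of rational slope, and identify $S_{\ZZ^2,p}$ with $V(I)\cap(p+\ZZ^2)$. For left noetherianity the paper's main device is the conjugation $\theta(a)=f^{-1}af$ on the quotient division ring $Q(B)$: since $\mathbb{I}_B(IB)=C+fB$ and $f$ commutes with $C$, $\theta$ carries $\mathbb{I}_B(IB)$ isomorphically onto $\mathbb{I}_B(BI)$, so left noetherianity of the right idealiser is equivalent to left noetherianity of the left idealiser, which by the opposite-sided version of Theorem \ref{R} is governed by exactly the same orbit condition; this yields ``left noetherian iff right noetherian'' without computing any Tor group, exploiting principality of $I$ only through $f$ being a normal element of $B$. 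Your argument instead computes $\Tor_1^C(C/I^{-a},C/J)$ directly from the resolution $0\to C\xrightarrow{f}C\to C/I\to 0$, obtaining nonvanishing precisely when $V(J)\subseteq a.V(I)$, and then splits prime ideals $J$ by dimension: at most one $a$ in the curve case by triviality of the stabiliser, and a bijection with $V(I)\cap(q+\ZZ^2)$ in the point case (the case $J=0$ contributing nothing). This supplies the details the paper omits when it remarks that ``the Tor condition also gives'' the result. Both approaches are complete; the only cosmetic slip is that the finiteness of $T_{\ZZ^2,V(J)}$ for all primes $J$ is literally the criterion of Theorem \ref{L} rather than of Theorem \ref{finalLeft}, but Lemma \ref{CTLN} makes these equivalent, so nothing is lost.
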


We now seek to understand which ideals $I$, or indeed which curves $V(I)$, satisfy this condition. 
We turn to Siegel's theorem on integral points.
\begin{theorem}\cite[Theorem 3.2]{Zan}
Let $C$ be an affine irreducible curve over a number field $k$, and
suppose it has infinitely many integral points. Then $C$ has genus $0$ and at most
two points at infinity.
\end{theorem}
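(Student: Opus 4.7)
The plan is to follow Siegel's original strategy, which hinges on two pillars: Diophantine approximation (in the form of the Thue--Siegel--Roth theorem) and the Mordell--Weil theorem for Jacobians. The case of genus $g \geq 1$ is the heart of the matter, so I would handle it first. After choosing a base point, embed $C$ into its Jacobian $J$ via an Abel--Jacobi map $\iota \colon C \hookrightarrow J$. By the Mordell--Weil theorem, the group $J(k)$ is finitely generated, so for any integer $n \geq 2$ the rational points of $J$ are covered by a bounded number of translates of $nJ(k)$. Pulling this covering back through $\iota$ and using the fact that the multiplication-by-$n$ isogeny $[n] \colon J \to J$ is unramified, I obtain an unramified cover $C_n \to C$; after passing to a suitable finite extension of $k$ and enlarging the set $S$ of places, each integral point of $C$ lifts to an $S$-integral point of $C_n$.

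The second step is to apply Roth's theorem to force a contradiction from infinitely many integral points. A height computation on the unramified cover shows that an infinite sequence of integral points of $C$ would produce an algebraic number of fixed degree that is approximated by rationals better than the exponent $2+\varepsilon$ allowed by Roth, a contradiction. This forces $C(O_{k,S})$ to be finite whenever $g \geq 1$. For genus zero curves, I would analyze the points at infinity separately: if $C$ has three or more places at infinity, then a rational parametrization reduces the integral-point question to a Thue-type equation $F(x,y)=c$ with $F$ a binary form of degree $\geq 3$, and Thue's theorem gives finiteness directly. Combined, these two cases show that infinitely many integral points force $g=0$ together with at most two places at infinity, as claimed.

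The main obstacle is the depth of the ingredients rather than any slick combinatorial trick: both Mordell--Weil for Jacobians over number fields and Roth's approximation theorem are substantial theorems, and the non-effectivity of Roth means the proof yields no explicit bound on the number of integral points. The delicate technical point is the height estimate on the unramified cover, which must be arranged so that the lifted points genuinely produce approximations violating Roth's bound. Since this is a classical result, in practice I would follow the exposition in Zannier's book cited above rather than reproduce the full argument here.
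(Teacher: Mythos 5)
This statement is quoted in the paper directly from Zannier's edition of Siegel's work (\cite[Theorem 3.2]{Zan}) and is not proved there at all, so there is no in-paper argument to compare against. Your outline is a correct sketch of the classical proof strategy (Abel--Jacobi embedding, Mordell--Weil, unramified covers with Chevalley--Weil descent, Roth's theorem via a height gain on the cover for genus $\geq 1$, and a Thue/$S$-unit reduction for genus $0$ with three or more points at infinity), which is essentially the approach of the cited source; for the purposes of this paper the result is simply imported as a black box.
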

We observe that if we were simply interested in whether $V(I)\cap \ZZ^2$ was finite where $V(I)$ was defined over some finite field extension of $\QQ$ then this would be a straightforward application of this theorem. However, this is not the case and in addition we are interested in (possibly non-rational) translations of $V(I)$ and hence we search for some `dynamical' Siegel's Theorem. Thus we turn to Lang's generalisation of Siegel's Theorem from \cite[Chapter VII pp.121 and  Theorem $4$]{Lang}:
\begin{theorem}\label{Lang}
If $C$ is an affine curve defined over a ring $S$ finitely generated over $\ZZ$, and if its genus is $g\geq 1$, then $C$ has only a finite number of points in $S$.
\end{theorem}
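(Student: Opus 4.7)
The plan is to follow the classical Siegel--Mahler strategy for integral points on curves, but lifted to the setting of a finitely generated ring $S$ over $\ZZ$ rather than a ring of $S$-integers in a number field. Let $K$ be the fraction field of $S$, which is a finitely generated extension of $\QQ$. First I would reduce to the case that $C$ is smooth and geometrically irreducible (by passing to the normalisation and replacing $K$ by a finite extension; integral points on the affine $C$ correspond to $K$-points of a smooth projective completion $\bar C$ that avoid the finite set of points at infinity). I would then fix a $K$-rational point on $C$ (again enlarging $K$ finitely if needed) and use it to embed $C$ into its Jacobian $J = J(C)$, an abelian variety of dimension $g \geq 1$.

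Next I would invoke the Lang--N\'eron generalisation of the Mordell--Weil theorem: for an abelian variety $J$ defined over a finitely generated field $K$, the group $J(K)$ is finitely generated modulo the ``$K/k$-trace'' (here this trace is trivial since $C$ has no constant part in the appropriate sense). In particular, for any integer $n \geq 2$ the quotient $J(K)/nJ(K)$ is finite. If $C(S)$ were infinite, then after choosing a base point, infinitely many $S$-integral points of $C$ would lie in a single coset of $nJ(K)$ inside $J(K)$. Pulling back along multiplication by $n$ on $J$, we would obtain infinitely many $S$-integral points on an unramified cover $C' \to C$ of degree $n^{2g}$; by the Riemann--Hurwitz formula the genus of $C'$ grows with $n$ and in particular $C'$ has genus $\geq 2$ once $n$ is large.

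The final step, and the main obstacle, is to derive a contradiction from the existence of infinitely many $S$-integral points on the covering curve. Over a number ring this is exactly where Roth's theorem and the theory of heights enter Siegel's argument. Over a finitely generated ring $S$ one must therefore develop heights relative to \emph{all} places on the function field of a model of $\spec S$ (both archimedean and non-archimedean), and prove a Roth-type inequality in that generality. This is Lang's technical contribution in Chapter VII of \emph{Diophantine Geometry}: once heights are in place, the usual Siegel argument (approximation by points in the cover, combined with the height-doubling property of $[n]$ on $J$) yields the required contradiction, forcing $C(S)$ to be finite.
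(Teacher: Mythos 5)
The paper offers no proof of this statement: it is quoted directly from Lang's \emph{Diophantine Geometry} (Chapter VII, Theorem 4) and used as a black box, so there is nothing internal to compare your argument against. Your sketch does follow the strategy Lang himself uses --- Siegel's method transported to finitely generated fields --- so the overall route is the right one, but as written it has two genuine problems.

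First, the Riemann--Hurwitz step fails in exactly the case that matters. For an unramified cover $C' \to C$ of degree $d$ one has $2g(C')-2 = d\,(2g(C)-2)$, so when $g(C)=1$ every pullback along multiplication by $n$ again has genus $1$; the genus does not grow with $n$. Since the paper itself notes that $g\geq 2$ already follows from Faltings, $g=1$ is the essential case, and there the covering trick contributes nothing via the genus: what it actually buys is that $[n]$ divides canonical heights by roughly $n^{2}$ while the distance to a point at infinity is distorted only boundedly, so the approximation exponent of the pulled-back integral points grows with $n$ and eventually violates the Roth bound. Second, you explicitly defer the entire analytic content --- the construction of heights over all places of a finitely generated field and the Roth-type inequality in that generality --- to Lang. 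That deferred step \emph{is} the theorem; once it is granted, the rest is the classical Siegel argument. So your write-up is best regarded as an annotated citation of Lang, which is in effect what the paper does, only more briefly. (A minor further point: the parenthetical dismissal of the $K/k$-trace in Lang--N\'eron is not needed --- in characteristic $0$ the trace is an abelian variety over a number field, whose rational points are finitely generated by ordinary Mordell--Weil, so $J(K)/nJ(K)$ is finite regardless.)
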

We note that if $g\geq 2$ this follows from Falting's Theorem. 

Applying the results, we obtain:
\begin{theorem}
With the setup from Example \ref{EX}, let $X=V(I)$ be an irreducible curve. If $X$ has genus $\geq 1$, then $\mathbb{I}_B(IB)$ is left and right noetherian.
\end{theorem}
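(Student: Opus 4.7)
The plan is to reduce to \emph{Proposition \ref{ints}} and then apply Lang's generalisation of Siegel's theorem (\emph{Theorem \ref{Lang}}). First, since $X$ is an irreducible curve of genus $\geq 1$, its defining ideal $I = (f)$ is a non-maximal prime in $C = \CC[x,y]$, and $X$ cannot be a line (lines have genus $0$), so in particular $V(I)$ is not a line of rational slope. Hence the hypotheses of \emph{Proposition \ref{ints}} are met, and it suffices to show that for every $b \in \CC^2$, the set
\[
V(I) \cap (b + \ZZ^2)
\]
is finite.

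Fix $b = (b_1,b_2) \in \CC^2$ and consider the translated curve $X - b = V(g)$, where $g(x,y) = f(x + b_1, y + b_2)$. Translation by $-b$ is an isomorphism of $\mathbb{A}^2$, so $X - b$ is still an irreducible affine curve of genus $\geq 1$, and translation provides a bijection
\[
V(I) \cap (b + \ZZ^2) \;\longleftrightarrow\; (X - b) \cap \ZZ^2.
\]
Thus it is enough to bound the integer points on $X - b$.

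To invoke \emph{Theorem \ref{Lang}}, let $S \subseteq \CC$ be the subring generated over $\ZZ$ by the (finitely many) coefficients of $g$; these coefficients are polynomial expressions in $b_1$, $b_2$, and the coefficients of $f$, so $S$ is finitely generated over $\ZZ$. By construction, $X - b$ is defined over $S$, and $\ZZ^2 \subseteq S^2$. Applying \emph{Theorem \ref{Lang}} to the curve $X - b$ of genus $\geq 1$, we conclude that $(X - b)(S)$ is finite, and therefore $(X - b) \cap \ZZ^2$ is finite. Combining with the previous paragraph and \emph{Proposition \ref{ints}} yields the theorem.

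Since all of the serious work has already been done in the literature (Lang's theorem, which subsumes Faltings' theorem for $g \geq 2$) and in \emph{Proposition \ref{ints}}, the only content of the argument is verifying that $X - b$ falls within the hypotheses of \emph{Theorem \ref{Lang}} after absorbing the translation parameter into the ground ring $S$. There is no real obstacle; the proof is essentially a one-line translation followed by a direct citation.
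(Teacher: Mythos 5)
Your proposal is correct and follows essentially the same route as the paper: reduce to Proposition \ref{ints}, translate the curve by $b$, adjoin the coefficients of the translated polynomial to $\ZZ$ to get a finitely generated ring $S$, and apply Theorem \ref{Lang}. The only cosmetic difference is that the paper argues contrapositively (infinitely many integer points forces genus $0$) while you argue directly, and you additionally spell out why the hypotheses of Proposition \ref{ints} are met.
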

\begin{proof}
Let $I = (f(x,y))$ for some irreducible polynomial $f(x,y)\in \mathbb{C}[x,y]$ and suppose that $\mathbb{I}_B(IB)$ is not noetherian. Then, by Proposition \ref{ints} there exists $c,d \in \CC$ such that $g\coloneqq f(c+x,d+y)$ has infinitely many integer solutions. Let $S$ be the $\ZZ$-algebra generated by the coefficients of $g$. Then $g$ defines a curve over $S$ and $g$ has an infinite number of solutions in $S$ (from its infinite integer solutions). Thus, by Theorem \ref{Lang}, $g$ must have genus $0$. Hence, $f$ also has genus $0$ as required.
\end{proof}
For example, if $X$ were a smooth cubic curve then the idealiser associated to this curve will always be left and right noetherian. So any cubic curve with non-trivial $j$-invariant will give a left and right noetherian idealiser.

However, if we consider a genus $0$ curve of the form 
$$x^2-ny^2=1$$
where $n$ is a given positive nonsquare integer, then it is a result of Lagrange that this curve has an infinite number of integral points. As an example, consider $x^2-7y^2=1$, then the integer solutions $(x_{k+1},y_{k+1})$ are given by the recurrence formula:
\begin{align*}
    &x_{k+1} = x_1x_k+7y_1y_k \\
    &y_{k+1} = x_1y_k+y_1x_k
\end{align*}
where $(x_1, y_1) = (8,3)$. So $\mathbb{I}_B((x^2-7y^2-1)B)$ is neither right nor left noetherian.

Recall that we did not consider lines with rational slope as they do not have trivial stabiliser. We deal these lines now.
\begin{proposition}
Assume the setup from Example \ref{EX}. Let $I\lhd C$ be a prime ideal in $C$ corresponding to a line in $\mathbb{A}^2$. Then $\mathbb{I}_B(IB)$ is right and left noetherian.
\end{proposition}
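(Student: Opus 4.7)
My plan is to apply Theorem \ref{R} and Theorem \ref{L} directly, exploiting the principal-ideal structure of $I$ in a crucial way. Write $I = (f)$ with $f = ax + by + c$ and $(a,b) \neq 0$. If the slope of $V(f)$ is irrational then the stabiliser $K = \Stab_{\ZZ^2}(I)$ is trivial, and the proposition follows already from Proposition \ref{ints}, since $V(f)$ meets every translate $b + \ZZ^2$ in at most one point. I therefore concentrate on the case of rational slope, in which I may assume $a,b \in \ZZ$ are coprime; then $K = \{(m,n) \in \ZZ^2 : am + bn = 0\}$ is infinite cyclic, and I fix any complement $H \leq \ZZ^2$, which is also isomorphic to $\ZZ$.

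For right noetherianity I verify condition (1) of Theorem \ref{R}: for every $p \in V(I)$, the set $S_{H,p}$ is finite. This is immediate because $V(f)$ is a linear subvariety; for $p \in V(f)$ the condition $h \cdot p \in V(f)$ forces $ah_1 + bh_2 = 0$, i.e.\ $h \in K$. Combined with $H \cap K = \{0\}$ this yields $S_{H,p} = \{0\}$.

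For left noetherianity I check condition (1) of Theorem \ref{L}. The key simplification is that $I$ is principal, so for any $h \in H$ the short free resolution $0 \to C \xrightarrow{f^{-h}} C \to C/I^{-h} \to 0$ gives, after tensoring with $C/J$,
\[
\Tor_1^C(C/I^{-h}, C/J) \;\cong\; (J : f^{-h})/J .
\]
For a prime ideal $J$ this quotient is nonzero precisely when $f^{-h} \in J$, i.e.\ $I^{-h} \subseteq J$, equivalently $V(J) \subseteq V(I^{-h})$. I then distinguish three cases by the dimension of $V(J)$: when $J = 0$ the Tor vanishes because $C$ is $C$-flat; when $V(J)$ is a closed point, the inclusion $V(J) \subseteq V(I^{-h})$ reduces to a single linear equation on $h \in H$ whose solution set is either empty or a singleton by the same complementarity argument used for right noetherianity; when $V(J)$ is an irreducible curve, the inclusion $V(J) \subseteq V(I^{-h})$ forces equality $V(J) = V(I^{-h})$, hence $J = I^{-h}$, and if $h_0 \in H$ is any such element then any other $h \in H$ with $I^{-h} = J$ satisfies $h - h_0 \in K$, so that $H \cap K = \{0\}$ forces $h = h_0$.

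The main obstacle I anticipate is this final case, where the complementarity $H \cap K = \{0\}$ is used essentially: without it, infinitely many $h \in H$ could send $V(I)$ to a single translate equal to $V(J)$, and left noetherianity would fail. Everything else is either routine linear algebra over $\ZZ$ or a direct appeal to Theorem \ref{R}, Theorem \ref{L}, or Proposition \ref{ints}.
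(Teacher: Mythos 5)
Your proposal is correct and follows essentially the same route as the paper: irrational slope is dispatched via Proposition \ref{ints}, and for rational slope you compute $K=\Stab_{\ZZ^2}(I)$, pick a complement $H$, get $S_{H,p}=\{0\}$ for right noetherianity, and use that $I$ principal makes $\Tor_1^C(C/I^{-h},C/J)$ nonzero exactly when $I^{-h}\subseteq J$, which happens for at most one $h\in H$. The only cosmetic difference is that the paper fixes the concrete complement $H=(0,1)\ZZ$ while you argue for an arbitrary complement via $H\cap K=\{0\}$, and you spell out the Koszul/ideal-quotient computation of $\Tor_1$ that the paper leaves implicit.
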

\begin{proof}
If $I$ corresponds to a line of irrational slope, $\Stab_{\ZZ^2}(I)=0$ and $V(I)\cap(b+\ZZ^2)<\infty$ for all $b \in \CC^2$, so by Proposition \ref{ints}, $\mathbb{I}_B(IB)$ is both left and right noetherian.

Now $I = (f) = (mx-ny-p)$ where $m,n,p \in \CC$. Without loss of generality $m,n \in \ZZ$ as the slope is rational and, by symmetry, we may assume $n\neq 0$. Then, as $\sigma^i\tau^j(f) = f$ if and only if $(i,j) \in (n,m)\ZZ$, $K = \Stab_{\ZZ^2}(I) = (n,m)\ZZ$. Consider $H = (0,1)\ZZ$. As $n \neq 0$, $H\cap K = \{0\}$ and, since $H,K \cong \ZZ$, $H \oplus K = \ZZ^2$ so $H$ is a complement to $K$. Further, as $\tau^j(p)\notin V(I)$ for any $p \in V(I)$ and $j\neq 0$, the set $S_{H,p} = \{(0,0)\}$ and hence, by Theorem \ref{R}, $\mathbb{I}_B(IB)$ is right noetherian. For left noetherianity we must show that the set $\{(m,n) \in (0,1)\ZZ \mid I^h \textrm{ is not homologically transverse to } J\}$ is finite for all prime $J\lhd C$. Then, as $V(I^h) \subseteq \mathbb{A}^2$ is a plane curve, $I^h$ and $J$ can only have a non homologically transverse intersection if $I^h \subseteq J$. But this can only happen at most once when $H = (0,1)\ZZ$, hence the set is finite and $\mathbb{I}_B(IB)$ is left noetherian.
\end{proof}
\end{example}
\section{More general groups}
In this section we briefly consider noetherianity of idealisers in group rings with polycyclic-by-finite groups. Providing a complement actually exists - something which may not happen in non-abelian groups - the same form of argument as in Theorems \ref{R} and \ref{L} goes through. We begin by showing that the sets 
$$S_{G,J}=\{g\in G \mid I^g \subseteq J\}$$
and
$$T_{G,J} = \{g \in G \mid \Tor_1^C(C/I,C/J^g)\neq 0\}$$
are right and left $K$-sets respectively.

\begin{lemma}\label{LRK}
The sets 
$$S_{G,J}=\{g\in G \mid I^g \subseteq J\}$$
and
$$T_{G,J} = \{g \in G \mid \Tor_1^C(C/I^{g^{-1}},C/J)\neq 0\}$$
are right and left $K$-sets respectively.
\end{lemma}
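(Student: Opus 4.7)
The plan is to mirror the argument of Lemma~\ref{sets}, but with careful bookkeeping of which side $K$ acts on, since $G$ need no longer be abelian. First I would make the action convention explicit: associativity of multiplication in $C\mypound G$ together with the defining relation $(rh)(sg) = (rs^h)(hg)$ forces $(r^g)^h = r^{hg}$ for all $r \in C$ and $g, h \in G$, and hence $I^{hg} = (I^g)^h$ at the level of ideals. Since $K = \Stab_G(I)$, this means $I^k = I^{k^{-1}} = I$ for every $k \in K$, so a $K$-element may be freely absorbed whenever it appears on the ``inside'' of an exponent of $I$.

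For $S_{G,J}$, I would fix $g \in S_{G,J}$ and $k \in K$ and compute $I^{gk} = (I^k)^g = I^g \subseteq J$, so $gk \in S_{G,J}$; this shows that $S_{G,J}$ is closed under right multiplication by $K$ and hence is a right $K$-set. For $T_{G,J}$, the analogous step is to fix $g \in T_{G,J}$ and $k \in K$ and verify that $kg \in T_{G,J}$. Using $(kg)^{-1} = g^{-1} k^{-1}$, the same identity yields $I^{(kg)^{-1}} = (I^{k^{-1}})^{g^{-1}} = I^{g^{-1}}$, whence $\Tor_1^C(C/I^{(kg)^{-1}}, C/J) = \Tor_1^C(C/I^{g^{-1}}, C/J) \neq 0$, so $kg \in T_{G,J}$ and $T_{G,J}$ is a left $K$-set.

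There is essentially no obstacle here beyond getting the sidedness right. The asymmetry between $S_{G,J}$ and $T_{G,J}$ is entirely an artefact of the inverse in the definition of $T_{G,J}$: absorbing a stabiliser element into $I^g$ requires placing it on the right of $g$, while absorbing one into $I^{g^{-1}}$ requires placing it on the right of $g^{-1}$, i.e.\ on the left of $g$. Once the convention $I^{hg} = (I^g)^h$ is pinned down, each verification collapses to a one-line calculation and the lemma follows immediately.
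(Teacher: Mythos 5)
Your proof is correct and follows essentially the same route as the paper: for $S_{G,J}$ the computation $I^{gk} = (I^k)^g = I^g \subseteq J$ is exactly the paper's argument, and your derivation of the convention $(r^g)^h = r^{hg}$ from associativity of $C\mypound G$ is a welcome clarification of the bookkeeping. For $T_{G,J}$ you are in fact slightly more direct than the paper, which establishes $a \in T_{G,J} \iff ka \in T_{G,J}$ via a chain of equivalences shuttling the group element between the two arguments of $\Tor_1^C$ (using equivariance of $\Tor$ under automorphisms), whereas you simply note the ideal equality $I^{(kg)^{-1}} = (I^{k^{-1}})^{g^{-1}} = I^{g^{-1}}$ so that the two $\Tor$ groups are literally identical; either way the key step is the same absorption of the stabiliser element.
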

\begin{proof}First for $S_{G,J}$ where $J \supseteq I$. Let $a \in S_{G,J}$ and let $k \in K$.
Then $$I^{ak}\subseteq I^a \subseteq J$$
by the definition of $K$.

Now for $T_{G,J}$ where $J\lhd C$ is an arbitrary prime ideal. Then
\begin{align*}
\Tor_1^C(C/I^{a^{-1}},C/J) =0 & \iff 
\Tor_1^C(C/I,C/J^a) =0 \iff  \Tor_1^C(C/I^{k^{-1}},C/J^a)=0 \\ &\iff  \Tor_1^C(C/I,C/J^{ka})=0 \iff
\Tor_1^C(C/I^{(ka)^{-1}},C/J)=0
\end{align*}
as required. We note that neither of these sets are necessarily two-sided $K$-sets.
\end{proof}
Armed with this result, the generalisation of Theorems \ref{R} and \ref{L} follows through exactly the same argument.
\begin{theorem}\label{R'}
Assume Notation \ref{Not} and that $K = \Stab_G(I)$ has at least one complementary subgroup. Then the following are equivalent:
\begin{enumerate}
    \item[(1)] there exists a subgroup $H\leq G$, complementary to $K$, such that for all points $p \in \spec C$ the set
    $S_{H, p}$
    is finite;
    \item[(2)] $R$ is right noetherian;
    \item[(3)] for all subgroups $H \leq G$, complementary to $K$, and for all points $p \in \spec C$, $S_{H,p}$ is finite.
\end{enumerate}
\end{theorem}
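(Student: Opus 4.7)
The plan is to mirror the proof of Theorem \ref{R}, substituting right cosets of $K$ where appropriate and invoking Lemma \ref{LRK} in place of commutativity. Fix a complementary subgroup $H$ as in $(1)$, and choose coset representatives $a_1,\ldots,a_m$ for the finite-index subset $HK \leq G$. For any prime ideal $J \lhd C$ containing $I$, the computation $I^{a_i h k} = ((I^k)^h)^{a_i} = I^{a_i h}$, which uses the anti-homomorphism rule $I^{gh} = (I^h)^g$ forced by the skew-group-ring convention together with $I^k = I$ for $k \in K$, shows that membership of $a_i h k$ in $S_{G, V(J)}$ depends only on the $a_i h$-part. This yields the decomposition
\[ S_{G, V(J)} \;=\; \bigsqcup_{i=1}^{m} a_i\, S_{H, a_i.V(J)}\, K, \]
which is a finite union of right $K$-cosets by hypothesis $(1)$, since $S_{H, a_i.V(J)} \subseteq S_{H,p}$ for any $p \in a_i.V(J)$.

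Next, I would transfer the module computation from Theorem \ref{R} verbatim. Writing $b_{ij} := a_i h_j$ as $h_j$ ranges over the finite set $S_{H, a_i.V(J)}$,
\[ \frac{(JB:IB)}{JB} \;=\; \bigoplus_{g \in S_{G, V(J)}} \frac{C}{J}\, g \;=\; \bigoplus_{i,\,j}\; \bigoplus_{k \in K} \frac{C}{J}\, b_{ij} k. \]
For each pair $(i,j)$, the containment $I^{b_{ij}} \subseteq J$ makes $(C/J) b_{ij}$ into a right $C/I$-module, and the inner summand is isomorphic to $(C/J) b_{ij} \tensor_{C/I} ((C/I) \mypound K)$, a cyclic right $R/IB$-module by Proposition \ref{RIB}. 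Summing over the finitely many pairs $(i, j)$ and invoking Theorem \ref{STPrime} gives $(2)$.

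For $(2) \implies (3)$, I would argue by contrapositive in direct analogy with Theorem \ref{R}. If some complement $H$ admits a point $p \in \spec C$ with $|S_{H, p}| = \infty$, then necessarily $p \in V(I)$ (as $h.p \in V(I)$ for $h \in S_{H, p}$), and writing $M = I(p)$, the right $R/IB$-module $(MB : IB)/MB$ contains
\[ \bigoplus_{h \in S_{H,p}} (C/M)\, h \;\tensor_{C/I}\; \bigl((C/I) \mypound K\bigr) \]
as a submodule. Distinctness of the right cosets $hK$, $h \in S_{H, p}$, follows from $H \cap K = \{e\}$, so this direct sum of non-zero cyclic modules is infinite and hence not finitely generated over $R/IB$; Theorem \ref{STPrime} then delivers the desired failure of right noetherianity. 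Finally, $(3) \implies (1)$ is trivial given the hypothesised existence of at least one complement.

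The main obstacle, absent in the abelian case, is the potential non-normality of $K$ in $G$: there is no quotient group $G/K$, only a coset space, and all identifications must respect the side conventions. The point at which this matters most is the reduction $I^{a_i h k} = I^{a_i h}$, which relies on $k$ appearing on the right so that it may be absorbed into the stabiliser. Lemma \ref{LRK} formalises precisely this absorption, and once it is in place the remaining module-theoretic bookkeeping proceeds exactly as in the abelian argument.
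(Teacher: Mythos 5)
Your proposal follows the same route as the paper, which itself only sketches this proof by asserting that the argument of Theorem \ref{R} carries over once Lemma \ref{LRK} supplies the right-$K$-set structure of $S_{G,V(J)}$; your decomposition $S_{G,V(J)}=\bigsqcup_i a_i S_{H,a_i.V(J)}K$, the tensor description of the graded pieces over $(C/I)\mypound K$, and the contrapositive for $(2)\Rightarrow(3)$ are exactly the intended bookkeeping, and your identification of the absorption $I^{a_ihk}=I^{a_ih}$ as the point where non-normality of $K$ must be handled is the right diagnosis.

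There is one slip in the $(2)\Rightarrow(3)$ step: from $|S_{H,p}|=\infty$ you conclude ``necessarily $p\in V(I)$, as $h.p\in V(I)$ for $h\in S_{H,p}$.'' That inference is false --- membership of $h.p$ in $V(I)$ says nothing about $p$ itself unless $e\in S_{H,p}$. You do need a point of $V(I)$, since Theorem \ref{STPrime} only tests primes containing $I$, so the gap must be closed: pick any $a\in S_{H,p}$ and set $q=a.p\in V(I)$; then $S_{H,q}=S_{H,p}a^{-1}$, which is still an infinite subset of $H$ precisely because $a\in H$, and the rest of your argument runs with $M=I(q)$. This is the non-abelian analogue of the remark in the proof of Theorem \ref{R} that infiniteness of $S_{H,p}$ propagates to a point of $V(I)$; with that one-line repair the proof is complete.
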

We note that the first condition in Theorem \ref{R'} is for all points $p\in \spec C$ as opposed to just points in $V(I)$. This is because $G$ no longer being abelian means we cannot only consider $S_{H,p}$ for $p\in V(I)$.
\begin{theorem}\label{L'}
Assume Notation \ref{Not} and that $K$ has at least one complementary subgroup. Then the following are equivalent:
\begin{enumerate}
    \item[(1)] there exists a subgroup $H\leq G$, complementary to $K$, such that 
    $$T_{H,V(J)} = \{a \in H \mid \Tor_1^C(C/I^{-a}, C/J)\neq 0\}$$
    is finite for all prime ideals $J\lhd C$;
    \item[(2)] $R$ is left noetherian;
    \item[(3)] for all complementary subgroups $H$ to $K$, $T_{H,V(J)}$ is finite for all prime ideals $J \lhd C$.
\end{enumerate}
\end{theorem}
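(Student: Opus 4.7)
The plan is to mirror the proof of Theorem \ref{L}, substituting the non-abelian coset machinery for the direct-sum decomposition used in the abelian case. First I would invoke Theorem \ref{tor}, which is stated for general polycyclic-by-finite $G$, to reduce left noetherianity of $R$ to finite generation of $\bigoplus_{g \in G} \Tor_1^C(C/I, C/J^g)\, g$ as a left $R/IB$-module for every prime $J \lhd C$. Proposition \ref{RIB} identifies $R/IB$ with $(C/I)\mypound K$ (its proof does not use abelianness of $G$), so the target becomes finite generation over $(C/I)\mypound K$.

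For $(1) \Rightarrow (2)$, fix a complement $H$ with $T_{H,V(J')}$ finite for every prime $J'$. Since $HK$ has finite index in $G$, choose coset representatives $a_1, \dots, a_m$ so that $G = \bigsqcup_{i=1}^m a_i H K$. Using Lemma \ref{LRK} (that $T_{G,V(J)}$ is a left $K$-set) together with a change-of-variables calculation analogous to the one in Theorem \ref{L}, I would derive
$$T_{G,V(J)} \;=\; \bigsqcup_{i=1}^m a_i \, T_{H, V(J^{a_i})} \, K,$$
so that the image of $T_{G,V(J)}$ in $G/K$ is finite. The Tor module then decomposes into a finite direct sum indexed by coset representatives $b_j$, and each summand $\bigoplus_{k \in K} \Tor_1^C(C/I, C/J^{b_j k})(b_j k)$ is a finitely generated left $(C/I)\mypound K$-module by the argument used for Theorem \ref{L}: the graded pieces $\bigoplus_k (I \cap J^{b_j k}) k$ and $\bigoplus_k I J^{b_j k} k$ are graded left ideals of $C \mypound K$, which is noetherian by Theorem \ref{MR}, and $I$ acts trivially on the quotient.

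For $(2) \Rightarrow (3)$, I would argue the contrapositive: if some $T_{H,V(J)}$ is infinite, then $\Tor_1^B(B/IB, B/BJ)$ contains $\bigoplus_{a \in T_{H,V(J)}} \bigoplus_{k \in K} \Tor_1^C(C/I, C/J^{ak})(ak)$, an infinite direct sum of nonzero modules, so it cannot be finitely generated over $(C/I)\mypound K$. The implication $(3) \Rightarrow (1)$ is immediate.

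The main obstacle will be the careful bookkeeping of left versus right $K$-actions in the non-abelian setting. By Lemma \ref{LRK}, $T_{G,V(J)}$ is only a left $K$-set (whereas in the abelian case the left/right distinction is invisible), so the decomposition $G = \bigsqcup a_i HK$ must be ordered so that $K$ appears on the side where it actually acts, and the identity $T_{G,V(J)} = \bigsqcup a_i T_{H, V(J^{a_i})} K$ must be verified rather than assumed by analogy. In particular, we need $HK$ to be a genuine subgroup, i.e.\ $HK = KH$; this should be part of the implicit meaning of ``complementary subgroup'' when $G$ is non-abelian, and should be made precise before the coset argument can be run.
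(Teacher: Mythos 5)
Your proposal follows the paper's own route exactly: the paper proves Lemma \ref{LRK} and then simply asserts that Theorem \ref{L'} ``follows through exactly the same argument'' as Theorem \ref{L}, which is precisely the reduction via Theorem \ref{tor} and Proposition \ref{RIB}, the coset decomposition, and the $\Tor$ computation you outline. The one point to fix is the side of the $K$-action that you yourself flag as the main obstacle: since left multiplication by $k\in K$ sends the degree-$g$ component to degree $kg$ and $T_{G,V(J)}$ is a \emph{left} $K$-set by Lemma \ref{LRK}, the decomposition should read $T_{G,V(J)}=\bigsqcup_{i=1}^m K\,T_{H,V(J^{a_i})}\,a_i$, so that the $\Tor$ module is supported on finitely many left cosets $Kb_j$, rather than $\bigsqcup_i a_i\,T_{H,V(J^{a_i})}\,K$.
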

To close, we consider an example where the stabiliser does not have a complementary subgroup.
\begin{example}
We consider $G = \begin{pmatrix}
1&\ZZ&\ZZ \\
0 & 1 & \ZZ \\
0 & 0 & 1
\end{pmatrix}$
the Heisenberg group with centre $Z(G)=\begin{pmatrix}
1&0&\ZZ \\
0 & 1 & 0 \\
0 & 0 & 1
\end{pmatrix}$. 
We have that $G/Z(G) \cong \ZZ^2$, which we let act on $\CC[x,y]$ as in Example \ref{EX}. Requiring that $Z(G)$ acts trivially gives an induced action of $G$ on $\CC[x,y]$ with each maximal ideal having $Z(G)$ as its stabiliser. We claim that the centre does not have a complementary subgroup. Indeed, to obtain $[G:HZ(G)]< \infty $, since $Z(G) = [G,G]$ and the Hirsch length of $G$ is $3$, $H$ would have to be free abelian of rank $2$. But no subgroup of $G$ of this form intersects the centre trivially. Hence $Z(G)$ is an example of of a subgroup with no complement. Let $B = \mathbb{C}[x,y]\mypound G$, $M\lhd \CC[x,y]$ a maximal ideal, and $R= \mathbb{I}_B(MB)$.

However, we can still determine whether these idealisers are left or right noetherian. Indeed, as $M$ is a maximal ideal, we only have two choices for $J\supseteq M$ in $(JB:MB)/JB$, namely $J = M$ and $J = C$. In either case, $(JB:MB)/JB$ is a finitely generated right $(C/I)\mypound K$-module and hence, by Theorem \ref{STPrime}, $R$ is right noetherian. For left noetherianity, as $M$ is a maximal ideal of $\mathbb{C}[x,y]$ it is of the form $M = (x-p,y-q)$ for some $p,q \in \mathbb{C}$, consider the ideal $J = (x-p)\lhd \mathbb{C}[x,y]$. Let $H = \begin{pmatrix}
1&0&0 \\
0 & 1 & \ZZ \\
0 & 0 & 1
\end{pmatrix}$, then $J^h= J\subseteq M$ for all $h \in H$ and so $\Tor_1^C(C/M,C/J^h) =\Tor_1^C(C/M,C/J)\neq 0$. If $h,h' \in H$ are distinct then $hK\neq h'K$, so $\bigoplus_{h \in H}\Tor_1^C(C/M,C/J^h)h$ cannot be a finitely generated left $(C/I)\mypound K$-module. Thus, since $\bigoplus_{h \in H}\Tor_1^C(C/M,C/J^h)h\leq \bigoplus_{g \in G}\Tor_1^C(C/M,C/J^g)g $, $\bigoplus_{g \in G}\Tor_1^C(C/M,C/J^g)g$ cannot be finitely generated as a left $(C/I)\mypound K$-module and hence, by Theorem \ref{tor}, $R$ is not left noetherian.
\end{example}

\providecommand{\bysame}{\leavevmode\hbox to3em{\hrulefill}\thinspace}
\providecommand{\MR}{\relax\ifhmode\unskip\space\fi MR }
\providecommand{\MRhref}[2]{%
  \href{http://www.ams.org/mathscinet-getitem?mr=#1}{#2}
}
\providecommand{\href}[2]{#2}

\end{document}